\newcommand{\Hess}{D^2}
\newcommand{\R}{\mathbb R}
\newcommand{\N}{\mathbb N}
\newcommand{\osc}{\operatorname{osc}}
\newcommand{\Lop}{L_{\R^n}}
\newcommand{\LopOneD}{\tilde{L}}
\newcommand{\Psif}{\Psi}
\newcommand{\PV}{\operatorname{PV}}
\numberwithin{equation}{section}
\theoremstyle{plain} %% This is the default, anyway
\newtheorem{thm}[equation]{Theorem}
\newtheorem{lem}[equation]{Lemma}
\newtheorem{prop}[equation]{Proposition}
\theoremstyle{definition}
\theoremstyle{remark}
\newtheorem{rem}[equation]{Remark}
\newtheorem{conj}[equation]{Conjecture}
\begin{document}
%%% In the title, use a double backslash "\\" to show a linebreak:
%%% Use one of the following two forms:
\title[Modulus of continuity]{Modulus of continuity for solutions of non-local heat equations}
%%% Zeros of a second eigenfunction for a non-local Schr\"{o}dinger operator: a counterexample
%%% or
%%% \title[}
%%% \title{}
%%% If there are multiple authors, they’re described one at a time:
%%% First author: \author{} \address{} \curraddr{} \email{} \thanks{}
%%% Second author: \author{} \address{} \curraddr{} \email{} \thanks{}
%%% Third author: \author{} \address{} \curraddr{} \email{} \thanks{}
\author{Ben Andrews}
%%% In the address, show linebreaks with double backslashes:
%\address{}
%%% Current address is optional.
% \curraddr{}
%%% Email address is optional.
% \email{}
%%% If there’s a second author:
\author{Sophie Chen}
% \address{}
% \curraddr{}
% \email{}
%%% To have the current date inserted, use \date{\today}:
\date{\today}

%%% To include an abstract, uncomment the following two lines and type
%%% the abstract in between them:
\begin{abstract} We extend the method of modulus of continuity for solutions of parabolic equations---as used, for instance, to prove the Fundamental Gap Conjecture---to solutions of non-local heat equations on $\R^n$ and in dimension one with a non-local Neumann boundary condition. Specifically, we show that if a solution of a non-local heat equation has an initial modulus of continuity satisfying simple criteria, then this modulus of continuity is preserved at all subsequent times. In the process of trying to generalise our result in one dimension, we found a counterexample suggesting that a non-local analogue of the Payne-Weinberger inequality would depend on more than the diameter of a bounded (convex) domain.
\end{abstract}

%%% Footnote: keywords, date
\let\thefootnote\relax
\makeatletter\def\Hy@Warning#1{}\makeatother
\footnotetext{MSC2020:
%Primary 
% Fractional partial differential equations
35R11}
% Eigenvalue problems for integral equations
% 45C11;
% Secondary 
% % Basic linear algebra
% 15A11, 
% % Miscellaneous applications of functional analysis
% 46N11,
% % Variational methods for eigenvalues of operators
% 49R11}

\maketitle
%%% To include a table of contents, uncomment the following line:
\tableofcontents
%%%-------------------------------------------------------------------
%%%-------------------------------------------------------------------
%%% Introduction and main results
\section{Introduction and main results}\label{ch3_intro_main_results}
Let $L_\Omega$ denote the non-local linear operator
% non-local integral operator
\begin{align}\label{def_non_local_integral_operator}
    L_\Omega u(x):=\int_{\Omega}\rho(z-x)(u(z)-u(x))dz, \qquad x\in\Omega,
\end{align}
where the function $u$ belongs to a suitable function space, $\Omega$ is an open set in $\R^n$, and the kernel $\rho$ is a non-negative, non-increasing, and rotationally symmetric function that depends only on the distance between two points and decays to zero at infinity. We are interested in showing that if a solution $u(x,t)$ of the corresponding non-local heat equation
\begin{equation}\label{def_non_local_heat_equation}
    \begin{cases}
        u_t(x,t)=L_\Omega u(u,t), & (x, t)\in\Omega\times (0,\infty)\\
        u_0(x)=u(x,0) & x\in\Omega
    \end{cases}
\end{equation}
has an initial modulus of continuity with certain basic properties, then this modulus of continuity is preserved at all subsequent times. The three cases of interest are the following:
\begin{itemize}
    \item[1.] $\Omega=\R^n$ with no boundary condition;
    \item[2.] $\Omega\subset\R^n$ is a bounded convex domain, giving rise to the `regional' heat equation; and
    \item[3.] $\Omega=\R^n$ with no boundary condition, but we add a non-linearity to the non-local operator.
\end{itemize}
% Why these particular domains and boundary conditions? Recap what was explained in greater detail in chapter 1.
These choices were made on the basis of their potential future applications to (conjecture and) prove, for instance, a non-local version of the Payne-Weinberger inequality. In the first case, we prove modulus of continuity theorems for solutions of the non-local heat equation on $\R^n$ (Section \ref{sec_fractionalheat_Rn}). It turns out, however, that for our choice of non-local Neumann boundary condition, there exist counterexamples to the second eigenvalue having a sharp lower bound that only depends on the diameter of the domain. This implies that solutions of the regional heat equation in the second case only have the modulus of continuity property described above in one-dimension (Section \ref{sec_regional_heat_equation}). Finally, the third non-linear case was included as a natural extension of the techniques employed in the proof of the first case (Section \ref{sec_non_linear_non_local_heat_eqn}).

% Method of proof & similarities with classical case (outlined in chapter 1): 
In all three cases, the method of proof by contradiction employed follows a structure extensively used to prove similar propositions for solutions of classical heat equations; see \cite{AC, AC1, AC2, AC3}. Recall that a non-negative real-valued function $\omega:[0,\infty)\to[0,\infty)$ is a modulus of continuity for a function $f:\Omega\subset\R^n\to\R$ if $\omega$ is continuous at zero and vanishes there, and satisfies
\begin{equation}\label{eqn_mod_cont}
|f(y)-f(x)|\leq 2\omega\left(\frac{|y-x|}{2}\right)
\end{equation}
for all points $x,y$ in $\Omega$. Suppose $u$ is a sufficiently regular solution of the non-local heat equation (\ref{def_non_local_heat_equation}) and  $\varphi:[0,\infty)\times[0,\infty)\to[0,\infty)$ is a real-valued function satisfying simple criteria.
%, including being continuous at zero and vanishing there. 
We want to demonstrate that if $\varphi(\cdot,0)$ is an initial modulus of continuity for $u(\cdot,0)$ on $\Omega$, then $\varphi(\cdot,t)$ is a modulus of continuity for $u(\cdot,t)$ at all subsequent times. More precisely, for any two points $x$ and $y$ in $\Omega$ and a time $t\geq 0$, we wish to show that 
\begin{equation*}%\label{eqn_pfaim}
|u(y,t)-u(x,t)|\leq 2\varphi\left(\frac{|y-x|}{2},t\right).
\end{equation*}
% Proof by contradiction outline
% Following the model proof outline of chapter \ref{Introduction}, 
By defining an auxiliary function $Z_\epsilon:\Omega\times \Omega\times[0,\infty)\to\R$ given by
\begin{equation*}%\label{eqn_mod_cont_auxillary_n}
Z_\epsilon(x,y,t)=u(y,t)-u(x,t)-2\varphi\left(\frac{|y-x|}{2},t\right)-\epsilon e^t E(x,y),
\end{equation*}
it suffices to show that $Z_\epsilon<0$ for each $\epsilon>0$. The cases $t=0$, $x=y$, or when at least one of $x$ or $y$ is sufficiently large in magnitude will follow directly from the properties of $\varphi$ that we specify. Most of the work therefore centres around showing the inequality holds when $x$ and $y$ are distinct and suitably small, and $t>0$. This is enabled by using a maximum principle.\\

% % Set up proof by contradiction
% To generate a contradiction, suppose that for some $\epsilon_0>0$, $Z_{\epsilon_0}$ is positive at some point. We want to claim that there exists a first time $t_0>0$ and a pair of distinct points $x_0$ and $y_0$ in $\Omega$ such that $Z_{\epsilon_0}(x,y,t)\leq 0$ for $(x,y,t)\in\Omega\times [t,t_0]$ and $Z_{\epsilon_0}(x_0,y_0,t_0)=0$. We call this a `maximum principle'. If the domain $\Omega$ is also bounded, then compactness and continuity of $Z_{\epsilon_0}(\cdot,\cdot, t)$ already provide the ingredients we need to support the claim. Contrastingly, if $\Omega=\R^n$, for instance, then we require a small (but important) lemma before proceeding. 

% % Final step: depends on problem
% After arguing that we have a maximum principle, the remainder of the proof depends on the nature of the operator (including its boundary conditions) and properties of the domain on which it is defined. In the case of differential operators, a close analysis of the Hessian of $Z_{\epsilon_0}(\cdot,\cdot, t)$ usually ensues, whereas integral operators require exploiting symmetries in the integration region to guide changes of variables to eventually achieve the desired upper-bound.\\ 

% Link to next section
We now turn to the results of this chapter, starting with a modulus of continuity theorem for solutions of a non-local heat equation on $\R^n$.

%\section{Modulus of continuity estimates}\label{sec_mod_cont_est}
\section{Non-local heat equation on $\R^n$}\label{sec_fractionalheat_Rn}

Consider sufficiently smooth solutions $u:\R^n\times[0,\infty)\rightarrow\R$ of the non-local heat equation
\begin{equation}\label{eq:nonlocal_heat}
\begin{cases}
u_{t}(x,t)=L_{\R^n} u(x,t), & (x,t)\in\R^n\times(0,\infty) \\
u(x,0)=u_{0}(x), & x\in\R^n,
\end{cases}
\end{equation}
where the non-local linear operator $L_{\R^n}$ is defined by
\begin{equation}
L_{\R^n} u(x):=\int_{\R^n}\rho(z-x)(u(z)-u(x))dz.
\end{equation}

\subsection{Lebesgue integrable kernels}
First we address the class of non-local operators whose kernel $\rho$ belongs to $L^1(\R^n)$ and also satisfy the properties listed in Section \ref{ch3_intro_main_results}.

\begin{thm}[Non-local modulus of continuity on $\R^n$]\label{thm_modulus_Rn}
Suppose $u:\R^n\times[0,\infty)\rightarrow\R$ satisfies the non-local heat equation \eqref{eq:nonlocal_heat} and $\|u\|_{L^{\infty}(\R^n)}\le 1$. Let $\varphi:[0,\infty)\times[0,\infty)\rightarrow[0,\infty)$ be a function with the following properties:
\begin{enumerate}
    \item[(a.)] $\varphi':= \partial\varphi(r,t)/\partial r>0$ and $\lim_{r\rightarrow0^{+}}\varphi(r,t)=\varphi(0,t)=0$ for each $t\ge0$.
    \item[(b.)] $\varphi(\cdot,0)$ is a modulus of continuity for $u(\cdot,0)$.
    \item[(c.)] $\varphi$ has an odd extension
    \begin{equation}
    \tilde{\varphi}(r,t)=\begin{cases}\varphi(r,t),&r\ge0\\ -\varphi(-r,t),&r<0\end{cases}
    \end{equation}
    satisfying $\lim_{r\rightarrow\pm\infty}\tilde{\varphi}(r,t)=\pm1$ and the one-dimensional non-local heat equation
    \begin{equation}\label{eq:1d_heat_eq}
    \tilde{\varphi}_{t}(r,t)=\int_{\R}\tilde{\rho}(w)(\tilde{\varphi}(r+w,t)-\tilde{\varphi}(r,t))dw,\quad r\in\R,t>0,
    \end{equation}
    where $\tilde{\rho}(w)=\int_{\R^{n-1}}\rho(w,p)dp$.
\end{enumerate}
Furthermore, assume we can define a suitable regularisation function:
\begin{enumerate}
    \item[(d.)] There exists a smooth, radially symmetric regularisation function $\Psi:\R^n \to [1,\infty)$ such that $\Psi(x) \to \infty$ as $|x|\to\infty$, $L_{\R^n}\Psi(x)$ is well-defined for every $x\in\R^n$, and the action of the operator $L_{\R^n}$ on $\Psi$ is globally bounded:
    \begin{equation}\label{eq:K_bound}
        0\leq K := \sup_{x\in\R^n} L_{\R^n}\Psi(x) < \infty.
    \end{equation}
\end{enumerate}
Then $\varphi(\cdot,t)$ is a modulus of continuity for $u(\cdot,t)$ on $\R^n\times[0,\infty)$.
\end{thm}

%See Section \ref{sec_bdd_regn_fn}.

\begin{proof}
We want to show that $|u(y,t)-u(x,t)|\le 2\varphi\left(\frac{|y-x|}{2},t\right)$ for all $x, y \in \R^n$ and $t \ge 0$.\\

\textbf{Step 1: Define the auxiliary function.}
Let $K$ be the constant from condition (d). Choose a constant $C > K$. We define an auxiliary function $Z_{\epsilon}:\R^n\times\R^n\times[0,\infty)\rightarrow\R$ using the regularisation function $\Psi(x)$:
\begin{equation}
Z_{\epsilon}(x,y,t)=u(y,t)-u(x,t)-2\varphi\left(\frac{|y-x|}{2},t\right)-\epsilon e^{Ct}(\Psi(x)+\Psi(y)).
\end{equation}
It suffices to show that $Z_{\epsilon}<0$ for each $\epsilon>0$.

The cases $x=y$ and $t=0$ follow from conditions (a.) and (b.), respectively. Since $\|u\|_\infty \le 1$ and $\Psi(x) \to \infty$ as $|x|\to\infty$, $Z_\epsilon \to -\infty$ as $|x|$ or $|y| \to \infty$. Therefore, $Z_\epsilon$ must attain a global maximum at some finite point.\\

\textbf{Step 2: Maximum principle and time derivative.}
Suppose for contradiction that for some $\epsilon_0>0$, $Z_{\epsilon_0} \ge 0$ at some point. Then there exists a first time $t_{0}>0$ and distinct points $x_{0}, y_{0} \in \R^n$ such that $Z_{\epsilon_0}(x_{0},y_{0},t_{0})=0$, and $Z_{\epsilon_0}(a,b,t)\leq 0$ globally for $t\leq t_0$.

Simplify notation: let $\epsilon=\epsilon_0$, $x=x_{0}$, $y=y_{0}$, $t=t_0$ $s=|x-y|/2$, $E = \epsilon e^{Ct}$, and suppress time dependence in $u$ and $\varphi$.

At the maximum point $(x,y,t)$, the time derivative satisfies:
\begin{align}\label{eqn_Z_time_derivative2_n}
0 \le \frac{\partial Z_{\epsilon}}{\partial t} &= u_{t}(y)-u_{t}(x)-2\varphi_{t}(s) - C E (\Psi(x)+\Psi(y)) \nonumber \\
%&= L_{\R^n} u(y) - L_{\R^n} u(x) - 2\varphi_{t}(s) - C E (\Psi(x)+\Psi(y))\\
&=\int_{\R^n}\rho(z-y)(u(z)-u(y))dz-\int_{\R^n}\rho(z-x)(u(z)-u(x))dz\\
&- 2\varphi_{t}(s) - C E (\Psi(x)+\Psi(y))\nonumber.
\end{align}

\textbf{Step 3: Coupling-by-reflection and maximum principle bounds.}
We wish to generate a contradiction to inequality (\ref{eqn_Z_time_derivative2_n}). Simply rearranging the integrands and applying 
%Lemma \ref{lem_max_principle} 
the maximum principle to the result does not work as all terms cancel except $-2\varphi_t$. To avoid this problem, our strategy is to introduce $\varphi_t$ into the integrals via condition (c), which will enable us to cancel it out. 

First, we make a change of variables $z=r+x$ (or $z=r+y$), and choose coordinates so that the first standard unit vector in $\R^n$ is given by $e_1=\frac{y-x}{|y-x|}$. Then for any vector $r$ in $\R^n$ relative to a reference point $x$, its reflection in the hyperplane through $x$ and perpendicular to the chord $\overline{xy}$ joining $x$ and $y$ is given by
% Reflection vector
\begin{align}\label{eqn_reflctn}
    Rr=r-2\langle r, e_1\rangle e_1,
\end{align}
where $\langle\cdot,\cdot\rangle$ denotes dot product. Since the kernel $\rho$ only depends on the distance between two points, it follows that $\rho(r)=\rho(|r|)=\rho(|Rr|)=\rho(Rr)$, so we can rewrite inequality (\ref{eqn_Z_time_derivative2_n}) using the relationship
\begin{align*}
    \int_{\R^n}\rho(r)u(y+r)dr=\int_{\R^n}\rho(r)u(y+Rr)dr
\end{align*}
to give
\begin{align}\label{eqn_reflctn_integ}
    0\leq &\int_{\R^n}\rho(r)(u(y+Rr)-u(x+r))dr-\int_{\R^n}\rho(r)(u(y)-u(x))dr\nonumber\\
    &-2\varphi_t(s)-C E (\Psi(x)+\Psi(y)).
\end{align}

Next, observe that given a vector $r$ such that $z=x+r$, there exists a vector $r'$ joining $x$ to the reflection of $z$ in the hyperplane perpendicular to the mid-point of the chord $\overline{xy}$ so that 
\begin{align}\label{eqn_geom_changevar}
    x+r&=y+Rr'\qquad\mbox{and}\qquad y+Rr=x+r'.
\end{align}
See Figure \ref{reflection}. 
\begin{figure}[htp]
    \centering
    \includegraphics[width=12cm]{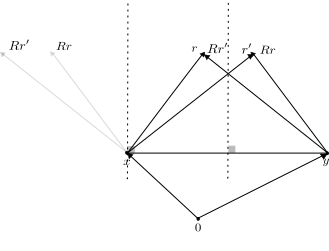}
    \caption{Change of variables using a `coupling-by-reflection' technique.}\label{reflection}
\end{figure}
Moreover, we can partition $\R^n$ into three mutually disjoint sets
\begin{align*}
    S&:=\left\{r\in\R^n:\langle r,e_1\rangle =\frac{|x-y|}{2}\right\}\\
    R&:=\left\{r\in\R^n:\langle r,e_1\rangle >\frac{|x-y|}{2}\right\}\\
    L&:=\left\{r\in\R^n:\langle r,e_1\rangle <\frac{|x-y|}{2}\right\}.
\end{align*}
Using the equations of  (\ref{eqn_geom_changevar}) in the first integral of inequality (\ref{eqn_reflctn_integ}), evaluated over the integration region $S$, implies that $r=r'$; thus,
\begin{align*}
    \int_{S}\rho(r)(u(y+Rr)-u(x+r))dr&=\int_{S}\rho(r)(u(x+r')-u(x+r))dr\\
    &=\int_{S}\rho(r)(u(x+r)-u(x+r))dr\\
    &=0.
\end{align*}
Otherwise, $r\neq r'$, and the vectors belong to different half-planes $R$ and $L$. To see this, suppose that $r$ belongs to $L$. Using equations (\ref{eqn_geom_changevar}) and (\ref{eqn_reflctn}), and writing $r_1=\langle r, e_1\rangle$ and $r'_1=\langle r', e_1\rangle$, we have
\begin{align*}
    r'=-(y-x)+r+2r'_1e_1,
\end{align*}
which implies
\begin{align*}
    r'_1=-|y-x|+r_1+2r'_1,
\end{align*}
or
\begin{align*}
    r'_1=|y-x|-r_1>\frac{|x-y|}{2},
\end{align*}
since $r_1<\frac{|x-y|}{2}$. Hence, $r'$ belongs to $R$. (Of course, we can interchange the roles of $r$ and $r'$, and $R$ and $L$, in the above argument.) Thus, inequality (\ref{eqn_reflctn_integ}) becomes
\begin{align}\label{eqn_reflctn_integ2}
    0\leq &\int_L\rho(r)(u(y+Rr)-u(x+r))dr+\int_R\rho(r)(u(y+Rr)-u(x+r))dr\nonumber\\
    &-\int_{\R^n}\rho(r)(u(y)-u(x))dr-2\varphi_t(s)- C E (\Psi(x)+\Psi(y)).
\end{align}

Now given a vector $r$ in $R$, define a map $g: r'\mapsto r$, where $r'$ is in $L$, so that using equations (\ref{eqn_geom_changevar}), we can change variables $r=y-x+Rr'$ and re-write the integrand in the second integral of inequality (\ref{eqn_reflctn_integ2}) as
\begin{align*}
    \rho(r)(u(y+Rr)-u(x+r))=\rho(g(r'))(u(x+r')-u(y+Rr')).
\end{align*}
This allows us to combine the first two integrals over a common integration region $L$ giving
\begin{align*}
    0\leq&\int_L(\rho(r)-\rho(g(r)))(u(y+Rr)-u(x+r))dr-\int_{\R^n}\rho(r)(u(y)-u(x))dr\\
    &-2\varphi_t(s)- C E (\Psi(x)+\Psi(y)).
\end{align*}
Observe that since $r\in L$, $g(r)\in R$, and $\rho$ is a non-negative, non-increasing, rotationally symmetric kernel, $\rho(r)\geq\rho(g(r))$, so $\rho(r)-\rho(g(r))\geq0$. Hence, we may apply the maximum principle
%Lemma \ref{lem_max_principle} 
to both integrals yielding
% Note that |y-x-2r_1e_1|=||y-x|e_1-2r_1ex|=||y-x|-2r_1|=|y-x|-2r_1, where r_1=<r,e_1> and r\in L, and so |y-x|-2r_1>0.

\begin{align*}
    0\leq&\int_L(\rho(r)-\rho(g(r)))\left(2\varphi\left(\frac{|y-x|}{2}-r_1\right)+E (\Psi(x+r)+\Psi(y+Rr)))\right)dr\\
    &-\int_{\R^n}\rho(r)\left(2\varphi\left(\frac{|x-y|}{2}\right)+E (\Psi(x)+\Psi(y)))\right)dr-2\varphi_t(s)- C E (\Psi(x)+\Psi(y)).
\end{align*}
% For the second integral, we use Z_{\epsilon_0}(x_0,y_0,t_0)=0, so u(y)-u(x)=2\varphi(|x-y|/2,t_0), were x=x_0, y=y_0, t=t_0.

    Finally, to introduce $\varphi_t$ into the calculation, we need to split up the first
integral into an integral over $L$ and an integral over $R$. We thus perform another change of variables $r=y-x+Rr'$, again using (\ref{eqn_geom_changevar}), and calculate
\begin{align*}
    \frac{|x-y|}{2}-r_1&=\frac{|y-x|}{2}-\langle r,e_1\rangle\\
    &=\frac{|y-x|}{2}-\langle y-x+Rr',e_1 \rangle\\
    &=-\frac{|y-x|}{2}+r_1'.
\end{align*}
This gives
\begin{align*}
0 &\leq\int_L\rho(r)\left(2\varphi\left(\frac{|y-x|}{2}-r_1\right)+E (\Psi(x+r)+\Psi(y+Rr)))\right)dr\\
&-\int_R\rho(r)\left(2\varphi\left(-\left(\frac{|y-x|}{2}-r_1\right)\right)+E (\Psi(x+r)+\Psi(y+Rr)))\right)dr\\
&-\int_{\R^n}\rho(r)\left(2\varphi\left(\frac{|y-x|}{2}\right)+E (\Psi(x)+\Psi(y)))\right)dr-2\varphi_t-E (\Psi(x)+\Psi(y)))\\
&\leq 2\int_{\R^n}\rho(r)\left(\tilde\varphi\left(\frac{|y-x|}{2}-r_1\right)-\tilde\varphi\left(\frac{|y-x|}{2}\right)\right)dr\\
&+E\int_{\R^n}\rho(r)\left(\Psi(y+Rr)-\Psi(y) +\Psi(x+r)-\Psi(x)\right)dr\\
&-2\varphi_t(s)-CE(\Psi(x)+\Psi(y))),
% &=2\int_{\R}\tilde\rho(\xi)\left(\tilde\varphi\left(\frac{|y-x|}{2}-\xi\right)-\tilde\varphi\left(\frac{|y-x|}{2}\right)\right)d\xi\\
% &+E\int_{\R^n}\rho(r)\left(\Psi(x+r)+\Psi(y+Rr)-(\Psi(x)+\Psi(y))\right)dr\\
% &-2\varphi_t-CE(\Psi(x)+\Psi(y)))\\\\
% &=2\int_{\R}\tilde\rho(\xi)\left(\tilde\varphi\left(\frac{|y-x|}{2}+\xi\right)-\tilde\varphi\left(\frac{|y-x|}{2}\right)\right)d\xi-\varphi_t-\epsilon e^t(1+|x|^2+|y|^2)\\
% &=\tilde\varphi_t-\tilde\varphi_t-\epsilon e^t(1+|x|^2+|y|^2)\\
% % Notice that \tilde\varphi includes \varphi in its definition, so we can replace \varphi_t with \tilde\varphi_t
% &<0,
\end{align*}
where we used equation (\ref{eq:1d_heat_eq}) to give the second inequality.

Let $I_\varphi$ and $I_\Psi$ denote the first and second integrals, respectively. We analyse each integral in turn.\\

\textbf{Step 4: Analyse the terms.} To resolve the first term $I_\varphi$, integrate out $\R^{n-1}$ dimensions,  apply a change of variables, and use condition (c.):
\begin{align*}
    I_\varphi&=2\int_{\R^n}\rho(r)\left(\tilde\varphi\left(\frac{|y-x|}{2}-r_1\right)-\tilde\varphi\left(\frac{|y-x|}{2}\right)\right)dr\\
    &=2\int_{\R}\tilde\rho(\xi)\left(\tilde\varphi\left(\frac{|y-x|}{2}-\xi\right)-\tilde\varphi\left(\frac{|y-x|}{2}\right)\right)d\xi\\
    &=2\int_{\R}\tilde\rho(\xi)\left(\tilde\varphi\left(\frac{|y-x|}{2}+\xi\right)-\tilde\varphi\left(\frac{|y-x|}{2}\right)\right)d\xi\\
    % y=-x, |dy/dx|=1, so dy=dx
    &=2 \tilde\varphi_t(s).
\end{align*}
For the second term $I_\Psi$, use the symmetry $\rho(r)=\rho(Rr)$ and apply the change of variables $r'=Rr$ to the first integral in the second line in the calculation below:
\begin{align*}
    I_\Psi&=E\int_{\R^n}\rho(r)\left(\Psi(y+Rr)-\Psi(y) +\Psi(x+r)-\Psi(x)\right)dr\\
    &=E\int_{\R^n} \rho(r) (\Psi(y+Rr)-\Psi(y))dr+E\int_{\R^n}\rho(r) (\Psi(x+r)-\Psi(x))dr\\
    &=E(L_{\R^n}\Psi(y)+L_{\R^n}\Psi(x)).
\end{align*}
By condition (d.), the last two terms are bounded above by $K$, so
\begin{align*}
    I_\Psi\leq 2EK.
\end{align*}

\textbf{Step 5: The contradiction.} Substituting the results from Step 4 into the final inequality from Step 3 gives
\begin{align*}
    0\leq 2\tilde\varphi_t(s)-2\tilde\varphi_t(s)+2EK-CE(\Psi(x)+\Psi(y))=2EK-CE(\Psi(x)+\Psi(y)).
\end{align*}
Since $\Psi(x) \ge 1$ and $\Psi(y) \ge 1$, we have $\Psi(x)+\Psi(y) \ge 2$. Thus,
\begin{align*}
0 &\le 2EK - 2CE \\
0 &\le 2E(K-C).
\end{align*}
As $E>0$ and we chose $C > K$, it follows that $K-C < 0$. This yields the contradiction.

Hence, $Z_{\epsilon}<0$ for each $\epsilon>0$. Letting $\epsilon \to 0$ completes the proof.
\end{proof}

\subsection{Singular kernels}
We now explain how Theorem~\ref{thm_modulus_Rn} extends to
principal-value kernels satisfying the Lévy integrability condition
\eqref{eq:levy_condition}, including the fractional Laplacian.

\medskip

The key point is that for singular kernels, the operator is no longer given
by an absolutely convergent integral, but must be interpreted in the
principal-value sense. Thus, the direct use of the full-space integrals in
Steps~2 and~3 of the proof of Theorem~\ref{thm_modulus_Rn} must be replaced
by truncated integrals and a limit procedure. We cannot directly
insert the truncation $\mathbf{1}_{\R^n\setminus B_\delta(0)}$ into every
integral in Step~3, however, and reuse the proof verbatim, because the change of
variables $g$ used to pair the $L$- and $R$-regions does \emph{not} preserve
the ball $B_\delta(0)$. This produces an additional small-ball remainder
term, which must be estimated separately. That is the only new issue.

\begin{lem}[Truncated reflection identity]
\label{lem:truncated_reflection}
Let \(K:\R^n\setminus\{0\}\to[0,\infty)\) be radial and non-increasing in
\(|r|\), and assume that the truncated integrals below are finite. With the
notation of Step~3 in the proof of Theorem~\ref{thm_modulus_Rn}, let
\[
L_K^\delta f(x):=\int_{|r|>\delta}K(r)\bigl(f(x+r)-f(x)\bigr)\,dr,
\qquad \delta>0.
\]
Then for every \(0<\delta<s\),
\begin{align*}
L_K^\delta u(y)-L_K^\delta u(x)
&=
\int_{L\setminus B_\delta(0)}\bigl(K(r)-K(g(r))\bigr)
\bigl(u(y+Rr)-u(x+r)\bigr)\,dr \\
&\qquad
-\int_{|r|>\delta}K(r)\bigl(u(y)-u(x)\bigr)\,dr
+\mathcal R_\delta,
\end{align*}
where
\[
\mathcal R_\delta:=
-\int_{L\cap B_\delta(0)}K(g(r))
\bigl(u(y+Rr)-u(x+r)\bigr)\,dr.
\]
Moreover, if \(u\) is bounded, then
\[
\mathcal R_\delta\to0
\qquad\text{as }\delta\downarrow0.
\]
\end{lem}

\begin{proof}
We recall the notation from Step~3 of the proof of
Theorem~\ref{thm_modulus_Rn}. Let
\[
s:=\frac{|x-y|}{2},
\qquad
e_1:=\frac{y-x}{|y-x|}.
\]
Thus \(y-x=2s e_1\). Let \(R\) denote reflection in the hyperplane
orthogonal to \(e_1\), namely
\[
Rr:=r-2\langle r,e_1\rangle e_1.
\]
Then \(R\) is orthogonal, so
\[
|Rr|=|r|,
\qquad
|\det R|=1.
\]
Since \(K\) is radial, \(K(Rr)=K(r)\).

We also recall the pairing map
\[
g(r):=(y-x)+Rr=2s e_1+Rr.
\]
The three regions are
\[
L:=\{r:\langle r,e_1\rangle<s\},
\qquad
S:=\{r:\langle r,e_1\rangle=s\},
\qquad
R:=\{r:\langle r,e_1\rangle>s\}.
\]
The map \(g\) swaps \(L\) and \(R\). Indeed,
\[
\langle g(r),e_1\rangle
=
2s-\langle r,e_1\rangle.
\]
Thus \(r\in L\), i.e. \(\langle r,e_1\rangle<s\), implies
\(\langle g(r),e_1\rangle>s\), so \(g(r)\in R\). Similarly, \(r\in R\)
implies \(g(r)\in L\). Moreover,
\[
g(g(r))=r,
\]
since \(R^2=\mathrm{id}\) and \(R(y-x)=-(y-x)\). Hence \(g\) is a
bijection from \(L\) onto \(R\). Since \(g\) is a translation composed with
an orthogonal reflection, it preserves Lebesgue measure.

We shall use the identities
\[
x+g(r)=y+Rr,
\qquad
y+Rg(r)=x+r.
\]
%The first follows from
%\[
%x+g(r)=x+(y-x)+Rr=y+Rr.
%\]
%For the second,
%\[
%y+Rg(r)
%=
%y+R\bigl((y-x)+Rr\bigr)
%=
%y+R(y-x)+r
%=
%y-(y-x)+r
%=
%x+r.
%\]

Set
\[
F(r):=u(y+Rr)-u(x+r).
\]
By definition,
\begin{align*}
L_K^\delta u(y)-L_K^\delta u(x)
&=
\int_{|r|>\delta}K(r)\bigl(u(y+r)-u(y)\bigr)\,dr \\
&\qquad
-\int_{|r|>\delta}K(r)\bigl(u(x+r)-u(x)\bigr)\,dr.
\end{align*}
In the first integral, change variables by the reflection \(r\mapsto Rr\).
Since \(R\) is orthogonal, it preserves Lebesgue measure and satisfies
\(|Rr|=|r|\). Since \(K\) is radial, \(K(Rr)=K(r)\). Therefore
\[
\int_{|r|>\delta}K(r)u(y+r)\,dr
=
\int_{|r|>\delta}K(r)u(y+Rr)\,dr.
\]
Here the truncated region \(\{|r|>\delta\}\) is invariant under the
reflection because \(|Rr|=|r|\). Therefore
\begin{align}
L_K^\delta u(y)-L_K^\delta u(x)
&=
\int_{|r|>\delta}K(r)\bigl(u(y+Rr)-u(x+r)\bigr)\,dr \notag\\
&\qquad
-\int_{|r|>\delta}K(r)\bigl(u(y)-u(x)\bigr)\,dr \notag\\
&=
\int_{|r|>\delta}K(r)F(r)\,dr
-\int_{|r|>\delta}K(r)\bigl(u(y)-u(x)\bigr)\,dr.
\label{eq:truncated-reflection-start}
\end{align}

We now decompose the first integral over \(L\), \(S\), and \(R\). Since
\(0<\delta<s\), the sets \(S\) and \(R\) do not meet \(B_\delta(0)\).
Indeed, if \(r\in S\), then
\[
|r|\ge |\langle r,e_1\rangle|=s>\delta,
\]
whereas if \(r\in R\), then
\[
|r|\ge \langle r,e_1\rangle>s>\delta.
\]
Thus
\[
S\cap B_\delta(0)=\varnothing,
\qquad
R\cap B_\delta(0)=\varnothing.
\]
Consequently,
\[
\{|r|>\delta\}
=
\bigl(L\setminus B_\delta(0)\bigr)\cup S\cup R,
\]
up to disjoint union, and hence
\begin{align}
\int_{|r|>\delta}K(r)F(r)\,dr
&=
\int_{L\setminus B_\delta(0)}K(r)F(r)\,dr
+\int_S K(r)F(r)\,dr
+\int_R K(r)F(r)\,dr.
\label{eq:truncated-LSR-split}
\end{align}

If \(r\in S\), then \(\langle r,e_1\rangle=s\), and so
\[
Rr=r-2s e_1=r-(y-x).
\]
Hence
\[
y+Rr=y+r-(y-x)=x+r.
\]
Therefore \(F(r)=0\) on \(S\), and so
\[
\int_S K(r)F(r)\,dr=0.
\]

It remains to rewrite the integral over \(R\). Since \(g:L\to R\) is a
measure-preserving bijection, the change of variables \(r'=g(r)\) gives
\[
\int_R K(r')F(r')\,dr'
=
\int_L K(g(r))F(g(r))\,dr.
\]
Using
\[
x+g(r)=y+Rr,
\qquad
y+Rg(r)=x+r,
\]
we have
\begin{align*}
F(g(r))
&=
u(y+Rg(r))-u(x+g(r)) \\
&=
u(x+r)-u(y+Rr) \\
&=
-F(r).
\end{align*}
Thus
\[
\int_R K(r)F(r)\,dr
=
-\int_L K(g(r))F(r)\,dr.
\]
Substituting this and the vanishing of the \(S\)-term into
\eqref{eq:truncated-LSR-split}, we get
\[
\int_{|r|>\delta}K(r)F(r)\,dr
=
\int_{L\setminus B_\delta(0)}K(r)F(r)\,dr
-
\int_L K(g(r))F(r)\,dr.
\]
Splitting the second integral over
\[
L=\bigl(L\setminus B_\delta(0)\bigr)\cup\bigl(L\cap B_\delta(0)\bigr),
\]
we obtain
\begin{align*}
-\int_L K(g(r))F(r)\,dr
&=
-\int_{L\setminus B_\delta(0)}K(g(r))F(r)\,dr
-\int_{L\cap B_\delta(0)}K(g(r))F(r)\,dr.
\end{align*}
Therefore
\begin{align*}
\int_{|r|>\delta}K(r)F(r)\,dr
&=
\int_{L\setminus B_\delta(0)}
\bigl(K(r)-K(g(r))\bigr)F(r)\,dr \\
&\qquad
-\int_{L\cap B_\delta(0)}K(g(r))F(r)\,dr.
\end{align*}
Define
\[
\mathcal R_\delta
:=
-\int_{L\cap B_\delta(0)}K(g(r))F(r)\,dr.
\]
Since \(F(r)=u(y+Rr)-u(x+r)\), this is exactly
\[
\mathcal R_\delta
=
-\int_{L\cap B_\delta(0)}
K(g(r))\bigl(u(y+Rr)-u(x+r)\bigr)\,dr.
\]
Substituting into \eqref{eq:truncated-reflection-start} gives
\begin{align*}
L_K^\delta u(y)-L_K^\delta u(x)
&=
\int_{L\setminus B_\delta(0)}
\bigl(K(r)-K(g(r))\bigr)
\bigl(u(y+Rr)-u(x+r)\bigr)\,dr \\
&\qquad
-\int_{|r|>\delta}K(r)\bigl(u(y)-u(x)\bigr)\,dr
+\mathcal R_\delta.
\end{align*}
This proves the identity.

It remains to show that \(\mathcal R_\delta\to0\) as
\(\delta\downarrow0\) when \(u\) is bounded. Suppose
\[
\|u\|_{L^\infty}<\infty.
\]
If \(r\in L\cap B_\delta(0)\), then \(|r|<\delta<s\). Since \(R\) is an
isometry and \(g(r)=(y-x)+Rr\), the reverse triangle inequality gives
\[
|g(r)|
\ge |y-x|-|Rr|
=
2s-|r|
>
2s-s=s.
\]
Thus \(g(r)\) remains away from the origin. Since \(K\) is radial and
non-increasing in \(|r|\),
\[
K(g(r))\le K(se_1)<\infty.
\]
Therefore
\begin{align*}
|\mathcal R_\delta|
&\le
\int_{L\cap B_\delta(0)}
K(g(r))
\bigl|u(y+Rr)-u(x+r)\bigr|\,dr \\
&\le
2\|u\|_{L^\infty}
\int_{L\cap B_\delta(0)}K(g(r))\,dr \\
&\le
2\|u\|_{L^\infty}K(se_1)\,|L\cap B_\delta(0)| \\
&\le
2\|u\|_{L^\infty}K(se_1)\,|B_\delta(0)|.
\end{align*}
Since \(|B_\delta(0)|\to0\) as \(\delta\downarrow0\), we conclude that
\[
\mathcal R_\delta\to0.
\]
The proof is complete.
\end{proof}

We now apply Lemma~\ref{lem:truncated_reflection} to prove an extension of
Theorem~\ref{thm_modulus_Rn} to principal-value kernels satisfying the Lévy
condition.

\begin{thm}[Modulus of continuity for principal-value kernels on $\R^n$]
\label{thm:pv_modulus_continuity}
Let $K:\R^n\setminus\{0\}\to[0,\infty)$ be measurable, even, radial, and
non-increasing in $|r|$, and assume
\begin{equation}\label{eq:levy_condition}
\int_{\R^n}(1\wedge |r|^2)\,K(r)\,dr<\infty.
\end{equation}
For any function \(f\) for which the following principal-value limit exists,
write
\begin{equation}\label{eq:LK_definition}
L_K f(x):=\PV\int_{\R^n}K(r)\bigl(f(x+r)-f(x)\bigr)\,dr
:=\lim_{\delta\downarrow0}\int_{|r|>\delta}K(r)\bigl(f(x+r)-f(x)\bigr)\,dr.
\end{equation}

Let $u:\R^n\times[0,\infty)\to\R$ satisfy:
\begin{enumerate}
\item[(i)] $u$ is bounded, with
\[
\|u\|_{L^\infty(\R^n\times[0,\infty))}\le 1;
\]
\item[(ii)] for each $t>0$, $u(\cdot,t)\in C^{1,1}_{\mathrm{loc}}(\R^n)$; and
\item[(iii)] the equation
\[
u_t(x,t)=L_Ku(\cdot,t)(x)
\qquad\text{for all }(x,t)\in\R^n\times(0,\infty),
\]
holds in the pointwise classical sense.
\end{enumerate}

Let $\varphi:[0,\infty)\times[0,\infty)\to[0,\infty)$ satisfy:
\begin{enumerate}
\item[(a)] for each $t\ge0$, the map $r\mapsto\varphi(r,t)$ belongs to
\(C^1([0,\infty))\), is strictly increasing, and satisfies \(\varphi(0,t)=0\);
\item[(b)] $\varphi(\cdot,0)$ is a modulus of continuity for $u(\cdot,0)$; and
\item[(c)] for each $t>0$, the odd extension
\[
\tilde\varphi(r,t):=
\begin{cases}
\varphi(r,t), & r\ge0,\\
-\varphi(-r,t), & r<0,
\end{cases}
\]
belongs to $C^{1,1}_{\mathrm{loc}}(\R)$, satisfies
\[
\lim_{r\to\pm\infty}\tilde\varphi(r,t)=\pm1,
\]
and solves
\begin{equation}\label{eq:1d_pv}
\tilde\varphi_t(s,t)
=
\PV\int_{\R}\tilde K(w)\bigl(\tilde\varphi(s+w,t)-\tilde\varphi(s,t)\bigr)\,dw,
\end{equation}
where
\[
\tilde K:\R\setminus\{0\}\to[0,\infty),
\qquad
\tilde K(w):=\int_{\R^{n-1}}K(w,p)\,dp
\quad (w\neq0).
\]
\end{enumerate}

Assume further that there exists a smooth radial function
\(\Psi:\R^n\to[1,\infty)\) such that
\[
\Psi(x)\to\infty\quad\text{as }|x|\to\infty,
\]
and such that \(L_K\Psi(x)\), defined by \eqref{eq:LK_definition}, exists
for every \(x\in\R^n\) and satisfies
\[
\sup_{x\in\R^n}L_K\Psi(x)\le K_0<\infty.
\]

Then, for every $t\ge0$, the function $\varphi(\cdot,t)$ is a modulus of
continuity for $u(\cdot,t)$:
\[
|u(y,t)-u(x,t)|\le 2\varphi\!\left(\frac{|x-y|}{2},t\right)
\qquad\text{for all }x,y\in\R^n.
\]
\end{thm}
\begin{proof}
We follow the proof of Theorem~\ref{thm_modulus_Rn}, indicating the points
where the principal-value setting requires a modification.

Fix \(\varepsilon>0\) and choose \(C>K_0\). Define
\[
Z_\varepsilon(x,y,t)
:=
u(y,t)-u(x,t)-2\varphi\!\left(\frac{|x-y|}{2},t\right)
-\varepsilon e^{Ct}\bigl(\Psi(x)+\Psi(y)\bigr).
\]
As in Step~1 of the proof of Theorem~\ref{thm_modulus_Rn}, it suffices to
prove that
\[
Z_\varepsilon(x,y,t)<0
\qquad\text{for all }x,y\in\R^n,\ t\ge0.
\]
Letting \(\varepsilon\downarrow0\) will then give the desired modulus
estimate.

The cases \(x=y\) and \(t=0\) are handled exactly as in
Theorem~\ref{thm_modulus_Rn}, using \(\varphi(0,t)=0\) and the fact that
\(\varphi(\cdot,0)\) is a modulus of continuity for \(u(\cdot,0)\). Since
\(u\) is bounded, \(\varphi\ge0\), and \(\Psi(z)\to\infty\) as
\(|z|\to\infty\), we also have
\[
Z_\varepsilon(x,y,t)\to-\infty
\qquad\text{as }\max\{|x|,|y|\}\to\infty,
\]
for each fixed \(t>0\).

Thus the same first-contact argument as in Step~2 of
Theorem~\ref{thm_modulus_Rn} applies. If \(Z_\varepsilon\) is nonnegative
somewhere, then there exists a first contact point
\[
(x,y,t)\in\R^n\times\R^n\times(0,\infty)
\]
with \(x\ne y\), such that
\[
Z_\varepsilon(x,y,t)=0,
\qquad
Z_\varepsilon(a,b,\tau)\le0
\quad\text{for all }a,b\in\R^n,\ 0\le\tau\le t.
\]
At this first contact point,
\[
0\le \partial_t Z_\varepsilon(x,y,t).
\]

Set
\[
s:=\frac{|x-y|}{2},
\qquad
E:=\varepsilon e^{Ct}.
\]
For \(\delta>0\), write
\[
L_K^\delta f(x):=
\int_{|r|>\delta}K(r)\bigl(f(x+r)-f(x)\bigr)\,dr.
\]
Since \(u_t=L_Ku\) pointwise in the principal-value sense,
\[
0\le
\partial_t Z_\varepsilon(x,y,t)
=
\lim_{\delta\downarrow0}
\Bigl(
L_K^\delta u(y)-L_K^\delta u(x)
-2\varphi_t(s,t)
-CE(\Psi(x)+\Psi(y))
\Bigr).
\]
Define
\[
A_\delta
:=
L_K^\delta u(y)-L_K^\delta u(x)
-2\varphi_t(s,t)
-CE(\Psi(x)+\Psi(y)).
\]
Then
\[
0\le \lim_{\delta\downarrow0}A_\delta.
\]

We now use the reflection notation from Step~3 of
Theorem~\ref{thm_modulus_Rn}. Set
\[
e_1:=\frac{y-x}{|x-y|},
\qquad
Rr:=r-2\langle r,e_1\rangle e_1,
\qquad
g(r):=(y-x)+Rr,
\qquad
r_1:=\langle r,e_1\rangle.
\]
Also define
\[
L:=\{r\in\R^n:\langle r,e_1\rangle<s\},
\qquad
S:=\{r\in\R^n:\langle r,e_1\rangle=s\},
\qquad
R:=\{r\in\R^n:\langle r,e_1\rangle>s\}.
\]

For \(0<\delta<s\), Lemma~\ref{lem:truncated_reflection} gives
\begin{align*}
A_\delta
&=
\int_{L\setminus B_\delta(0)}
\bigl(K(r)-K(g(r))\bigr)
\bigl(u(y+Rr)-u(x+r)\bigr)\,dr \\
&\qquad
-\int_{|r|>\delta}K(r)\bigl(u(y)-u(x)\bigr)\,dr \\
&\qquad
-2\varphi_t(s,t)
-CE(\Psi(x)+\Psi(y))
+\mathcal R_\delta.
\end{align*}

At the first contact point, for every \(r\), we have
\[
Z_\varepsilon(x+r,y+Rr,t)\le0.
\]
Hence
\[
u(y+Rr)-u(x+r)
\le
2\varphi\!\left(\frac{|(x+r)-(y+Rr)|}{2},t\right)
+
E\bigl(\Psi(x+r)+\Psi(y+Rr)\bigr).
\]
For \(r\in L\),
\[
\frac{|(x+r)-(y+Rr)|}{2}=s-r_1.
\]
Moreover, \(K(r)-K(g(r))\ge0\) on \(L\), by the radial monotonicity of
\(K\) and the reflection geometry. Therefore
\begin{align*}
A_\delta
&\le
2\int_{L\setminus B_\delta(0)}
\bigl(K(r)-K(g(r))\bigr)\varphi(s-r_1,t)\,dr \\
&\qquad
+E\int_{L\setminus B_\delta(0)}
\bigl(K(r)-K(g(r))\bigr)
\bigl(\Psi(x+r)+\Psi(y+Rr)\bigr)\,dr \\
&\qquad
-\int_{|r|>\delta}K(r)\bigl(u(y)-u(x)\bigr)\,dr \\
&\qquad
-2\varphi_t(s,t)
-CE(\Psi(x)+\Psi(y))
+\mathcal R_\delta.
\end{align*}

Since \(Z_\varepsilon(x,y,t)=0\), we have
\[
u(y,t)-u(x,t)
=
2\varphi(s,t)+E(\Psi(x)+\Psi(y)).
\]
Substituting this identity into the preceding inequality gives
\begin{align*}
A_\delta
&\le
\Bigg[
2\int_{L\setminus B_\delta(0)}
\bigl(K(r)-K(g(r))\bigr)\varphi(s-r_1,t)\,dr
-
2\varphi(s,t)\int_{|r|>\delta}K(r)\,dr
\Bigg] \\
&\qquad
+
E\Bigg[
\int_{L\setminus B_\delta(0)}
\bigl(K(r)-K(g(r))\bigr)
\bigl(\Psi(x+r)+\Psi(y+Rr)\bigr)\,dr \\
&\hspace{5.3cm}
-
(\Psi(x)+\Psi(y))\int_{|r|>\delta}K(r)\,dr
\Bigg] \\
&\qquad
-2\varphi_t(s,t)
-CE(\Psi(x)+\Psi(y))
+\mathcal R_\delta.
\end{align*}

Define
\[
I_{\varphi,\delta}
:=
2\int_{|r|>\delta}K(r)
\bigl(\tilde\varphi(s-r_1,t)-\tilde\varphi(s,t)\bigr)\,dr
\]
and
\[
I_{\Psi,\delta}
:=
E\int_{|r|>\delta}K(r)
\bigl(\Psi(y+Rr)-\Psi(y)+\Psi(x+r)-\Psi(x)\bigr)\,dr.
\]

We first rewrite the expression involving \(\varphi\), namely
\[
2\int_{L\setminus B_\delta(0)}
\bigl(K(r)-K(g(r))\bigr)\varphi(s-r_1,t)\,dr
-
2\varphi(s,t)\int_{|r|>\delta}K(r)\,dr.
\]
This expression is not exactly \(I_{\varphi,\delta}\), because the
truncation \(\{|r|>\delta\}\) is not invariant under the map \(g\).
Decomposing the integral defining \(I_{\varphi,\delta}\) over
\(L\), \(S\), and \(R\); using that the \(S\)-contribution has measure zero;
and changing variables on \(R\) by \(g:L\to R\), gives
\begin{align*}
\frac12 I_{\varphi,\delta}
&=
\int_{L\setminus B_\delta(0)}
K(r)\bigl(\varphi(s-r_1,t)-\varphi(s,t)\bigr)\,dr \\
&\qquad
-\int_L K(g(r))
\bigl(\varphi(s-r_1,t)+\varphi(s,t)\bigr)\,dr.
\end{align*}
Also,
\[
\int_{|r|>\delta}K(r)\,dr
=
\int_{L\setminus B_\delta(0)}K(r)\,dr
+
\int_L K(g(r))\,dr,
\]
again by the \(L/S/R\) decomposition and the change of variables
\(g:L\to R\). It follows that
\begin{align*}
&2\int_{L\setminus B_\delta(0)}
\bigl(K(r)-K(g(r))\bigr)\varphi(s-r_1,t)\,dr
-
2\varphi(s,t)\int_{|r|>\delta}K(r)\,dr \\
&\qquad
=
I_{\varphi,\delta}
+
J_{\varphi,\delta},
\end{align*}
where
\[
J_{\varphi,\delta}
:=
2\int_{L\cap B_\delta(0)}
K(g(r))\varphi(s-r_1,t)\,dr.
\]

Next consider the expression involving \(\Psi\). Since
\[
K(r)-K(g(r))\ge0
\qquad\text{on }L
\]
and \(\Psi\ge1\), we have
\begin{align*}
& E\Bigg[
\int_{L\setminus B_\delta(0)}
\bigl(K(r)-K(g(r))\bigr)
\bigl(\Psi(x+r)+\Psi(y+Rr)\bigr)\,dr \\
&\hspace{5.2cm}
-
(\Psi(x)+\Psi(y))\int_{|r|>\delta}K(r)\,dr
\Bigg] \\
&\qquad
\le
E\int_{|r|>\delta}K(r)
\bigl(\Psi(x+r)+\Psi(y+Rr)\bigr)\,dr
-
E(\Psi(x)+\Psi(y))\int_{|r|>\delta}K(r)\,dr \\
&\qquad
=
I_{\Psi,\delta}.
\end{align*}
Consequently,
\[
A_\delta
\le
I_{\varphi,\delta}
+
I_{\Psi,\delta}
-2\varphi_t(s,t)
-CE(\Psi(x)+\Psi(y))
+\mathcal R_\delta
+J_{\varphi,\delta}.
\]

We now show that
\[
J_{\varphi,\delta}\to0
\qquad\text{as }\delta\downarrow0.
\]
If \(r\in L\cap B_\delta(0)\) and \(0<\delta<s\), then the same estimate as
in Lemma~\ref{lem:truncated_reflection} gives
\[
|g(r)|\ge |y-x|-|r|>2s-s=s.
\]
Hence, since \(K\) is radial and non-increasing in \(|r|\),
\[
K(g(r))\le K(se_1)<\infty.
\]
Taking \(0<\delta<s/2\), we also have
\[
|r_1|\le |r|<\delta<s/2,
\]
and therefore
\[
s-r_1\in(s-\delta,s+\delta)\subset(s/2,3s/2).
\]
Since \(\varphi(\cdot,t)\) is continuous, it is bounded on
\([s/2,3s/2]\). Thus there is a constant \(C_{\varphi,s,t}\) such that
\[
0\le J_{\varphi,\delta}
\le
C_{\varphi,s,t}K(se_1)|L\cap B_\delta(0)|
\le
C_{\varphi,s,t}K(se_1)|B_\delta(0)|.
\]
Since \(|B_\delta(0)|\to0\), we conclude that
\[
J_{\varphi,\delta}\to0.
\]

It remains to pass to the limit in \(I_{\varphi,\delta}\) and
\(I_{\Psi,\delta}\).

For the \(\varphi\)-term, set
\[
d(w):=\tilde\varphi(s-w,t)-\tilde\varphi(s,t).
\]
Thus
\[
I_{\varphi,\delta}
=
2\int_{|r|>\delta}K(r)d(r_1)\,dr.
\]
Near the origin, subtract the linear part by defining
\[
\eta(w):=
d(w)+\tilde\varphi_r(s,t)w
=
\tilde\varphi(s-w,t)-\tilde\varphi(s,t)
+\tilde\varphi_r(s,t)w.
\]
Since \(\tilde\varphi(\cdot,t)\in C^{1,1}_{\mathrm{loc}}(\R)\), the
first-order Taylor remainder at \(s\) is quadratic. Hence there exist
\(\rho\in(0,1)\) and \(c>0\) such that
\[
|\eta(w)|\le c|w|^2
\qquad\text{for }|w|<\rho.
\]

For \(0<\delta<\rho\), split
\[
I_{\varphi,\delta}
=
2\int_{\delta<|r|<\rho}K(r)d(r_1)\,dr
+
2\int_{|r|\ge\rho}K(r)d(r_1)\,dr.
\]
On the annulus \(\{\delta<|r|<\rho\}\),
\[
d(r_1)=\eta(r_1)-\tilde\varphi_r(s,t)r_1.
\]
The linear term integrates to zero on this symmetric annulus:
\[
\int_{\delta<|r|<\rho}K(r)\,\tilde\varphi_r(s,t)r_1\,dr=0,
\]
because \(K\) is even, \(r_1=\langle r,e_1\rangle\) is odd in \(r\), and
the annulus is invariant under \(r\mapsto -r\). Therefore
\[
\int_{\delta<|r|<\rho}K(r)d(r_1)\,dr
=
\int_{\delta<|r|<\rho}K(r)\eta(r_1)\,dr.
\]
Since
\[
|\eta(r_1)|\le c|r_1|^2\le c|r|^2,
\]
the near-field integrand is dominated by \(cK(r)|r|^2\), which is
integrable near the origin by the Lévy condition. Hence dominated
convergence gives
\[
\lim_{\delta\downarrow0}
\int_{\delta<|r|<\rho}K(r)d(r_1)\,dr
=
\int_{|r|<\rho}K(r)\eta(r_1)\,dr.
\]

For the far-field part, \(d\) is bounded, since
\(\tilde\varphi(\cdot,t)\) is bounded on \(\R\). Moreover,
\[
\int_{|r|\ge\rho}K(r)\,dr<\infty.
\]
Indeed, on \(\rho\le |r|<1\),
\[
K(r)\le \rho^{-2}|r|^2K(r),
\]
and \(|r|^2K(r)\) is integrable near the origin by the Lévy condition; while
on \(|r|\ge1\), the Lévy condition gives
\[
\int_{|r|\ge1}K(r)\,dr<\infty.
\]
Thus the far-field integral is absolutely convergent.

Combining the near-field and far-field parts gives
\[
\lim_{\delta\downarrow0}I_{\varphi,\delta}
=
2\,\PV\int_{\R^n}K(r)
\bigl(\tilde\varphi(s-r_1,t)-\tilde\varphi(s,t)\bigr)\,dr.
\]
The singular linear part is odd and cancels under symmetric truncations,
while the quadratic remainder near the origin and the bounded tail are
absolutely integrable by the estimates above. Hence Fubini's theorem gives
\[
\PV\int_{\R^n}K(r)
\bigl(\tilde\varphi(s-r_1,t)-\tilde\varphi(s,t)\bigr)\,dr
=
\PV\int_{\R}\tilde K(w)
\bigl(\tilde\varphi(s-w,t)-\tilde\varphi(s,t)\bigr)\,dw.
\]
Since \(\tilde K\) is even, the change of variables \(w\mapsto -w\) gives
\[
\PV\int_{\R}\tilde K(w)
\bigl(\tilde\varphi(s-w,t)-\tilde\varphi(s,t)\bigr)\,dw
=
\PV\int_{\R}\tilde K(w)
\bigl(\tilde\varphi(s+w,t)-\tilde\varphi(s,t)\bigr)\,dw.
\]
Therefore, by the one-dimensional equation satisfied by
\(\tilde\varphi\),
\[
\lim_{\delta\downarrow0}I_{\varphi,\delta}
=
2\tilde\varphi_t(s,t).
\]

For the \(\Psi\)-term, using the change of variables \(r\mapsto Rr\) in
the first term, we have
\[
I_{\Psi,\delta}
=
E\bigl(L_K^\delta\Psi(y)+L_K^\delta\Psi(x)\bigr).
\]
By the assumption on \(\Psi\),
\[
\lim_{\delta\downarrow0}I_{\Psi,\delta}
=
E\bigl(L_K\Psi(y)+L_K\Psi(x)\bigr)
\le 2EK_0.
\]

Taking \(\limsup_{\delta\downarrow0}\) in the inequality for \(A_\delta\),
and using
\[
0\le \lim_{\delta\downarrow0}A_\delta,
\qquad
\mathcal R_\delta\to0,
\qquad
J_{\varphi,\delta}\to0,
\]
together with
\[
\lim_{\delta\downarrow0}I_{\varphi,\delta}=2\tilde\varphi_t(s,t),
\qquad
\lim_{\delta\downarrow0}I_{\Psi,\delta}\le2EK_0,
\]
we obtain
\[
0
\le
2\tilde\varphi_t(s,t)
+
2EK_0
-
2\varphi_t(s,t)
-
CE(\Psi(x)+\Psi(y)).
\]
Since \(s>0\), we have
\[
\tilde\varphi_t(s,t)=\varphi_t(s,t).
\]
Therefore
\[
0\le 2EK_0-CE(\Psi(x)+\Psi(y)).
\]
Finally, since \(\Psi\ge1\),
\[
0\le 2EK_0-2CE=2E(K_0-C),
\]
which is impossible because \(E>0\) and \(C>K_0\).

This contradiction shows that
\[
Z_\varepsilon(x,y,t)<0
\qquad\text{for all }x,y\in\R^n,\ t\ge0.
\]
Letting \(\varepsilon\downarrow0\) completes the proof.
\end{proof}

\begin{rem}
The fractional Laplacian corresponds to
\[
K(r)=C_{n,s}|r|^{-n-2s},
\qquad s\in(0,1),
\]
in which case
\[
L_K=-(-\Delta)^s
\]
and
\[
\tilde K(w)=C_{1,s}|w|^{-1-2s}.
\]
Accordingly, the one-dimensional equation \eqref{eq:1d_pv} becomes the one-dimensional fractional heat equation
\[
\tilde{\varphi}_t(r,t)
=
-(-\Delta)^s \tilde{\varphi}(r,t).
\]

The regularisation assumptions are satisfied by choosing
\[
\Psi(x)=(1+|x|^2)^{\alpha/2}
\qquad\text{for any }0<\alpha<2s.
\]
See the Appendix.
\end{rem}

%------------------------------------------
% Regional fractional Laplacian
%------------------------------------------
\section{Regional heat equation}\label{sec_regional_heat_equation}
We now consider the non-local heat equation on a bounded convex set $\Omega$ in $\R^n$. This gives rise to the `regional' heat equation, where the non-local interactions are restricted to the domain $\Omega$. We define the regional operator $L_\Omega$ as
\begin{equation}\label{defn_regional_operator}
L_\Omega u(x):=\int_{\Omega}\rho(z-x)(u(z)-u(x))dz, \quad x\in\Omega.
\end{equation}
The corresponding evolution equation is
% Non-local heat equation -- regional
\begin{equation}\label{eqn_non-local_heat_regional}
\left\{\begin{aligned}
    u_t(x,t)&=L_\Omega u(x,t), &x\in \Omega, t>0\\  
    u(x,0)&=u_0(x), & x\in \Omega, t=0.
    \end{aligned}
\right.
\end{equation}
We show that while a modulus of continuity theorem holds for solutions of equation (\ref{eqn_non-local_heat_regional}) on a compact interval in $\R$ (Theorem 3.2), this estimate does not generalise to higher dimensions.

%%% n=1 case %%%
\subsection{One-dimensional case}

Let $I$ denote the compact line interval $[-D/2,D/2]$.

% THM: 1d modulus of continuity - regional
\begin{thm}[Non-local modulus of continuity on a compact interval]\label{thm_1d_mod_cont_reg}
Suppose that $u:I\times[0,\infty)\to\R$ satisfies the one-dimensional regional heat equation (\ref{eqn_non-local_heat_regional}) on $I$
% % Non-local heat equation -- regional
% \begin{equation}\label{eqn_1d_heat_reg}
% \left\{\begin{aligned}
%     u_t(x,t)&=L_I u(x,t), &x\in I, t>0\\
%     u(x,0)&=u_0(x), & x\in I,
%     \end{aligned}
% \right.
% \end{equation}
and $\|u\|_{L^\infty(I)}<1$. Let $\varphi:[0,\infty)\times[0,\infty)\to[0,\infty)$ be a function with the following properties:
% Properties of \varphi
\begin{itemize}
\item[(a.)] $\varphi':= \partial\varphi(r,t)/\partial r>0$, and $\lim\limits_{r\to 0^+}\varphi(r,t)=\varphi(0,t)=0$ for each $t\geq 0$;
\item[(b.)] $\varphi(\cdot,0)$ is a modulus of continuity for $u(\cdot,0)$; and
\item[(c.)] $\varphi$ has an odd extension
% \varphi odd extension
\begin{equation}\label{2_mod_cont_odd_extn}
\tilde{\varphi}(r,t)=\left\{
\begin{array}{ll}
    \varphi(r,t), &r\geq 0\\
    -\varphi(-r,t), &r<0
\end{array}
\right.
\end{equation}
satisfying
\begin{align*}
    \lim_{r\to\pm \infty}\tilde\varphi(r,t)=\pm 1
\end{align*}
and the one-dimensional regional heat equation
% non-local condition for \varphi
\begin{equation}\label{eqn_non-localcondn_varphi}
\tilde{\varphi}_t(r,t)=L_I\tilde{\varphi}(r,t) \mbox{ on } I\times(0,\infty).
\end{equation}
\end{itemize}
Then $\varphi(\cdot,t)$ is a modulus of continuity for $u(\cdot,t)$ on $I\times [0,\infty)$.
\end{thm}

% Proof: thm_1d_mod_cont_reg
\begin{proof} We want to prove that $|u(y,t)-u(x,t)|\le 2\varphi(\frac{|y-x|}{2},t)$ for any $x, y \in I$ and $t\ge 0$.\\

% Step 1: Aim
\textbf{Step. 1: Define the auxiliary function.}  This is tantamount to showing that for any fixed $\epsilon>0$, the auxiliary function $Z_\epsilon:I\times I\times[0,\infty)\to\R$ defined by
\begin{equation}\label{eqn_mod_cont_auxillary}
Z_\epsilon(x,y,t)=u(y,t)-u(x,t)-2\varphi\left(\frac{|y-x|}{2},t\right)-\epsilon e^t
\end{equation}
is negative.
It suffices to show that $Z_{\epsilon}<0$ for each $\epsilon>0$. If $x=y$, property (a) implies $Z_{\epsilon}<0$. If $t=0$, property (b) implies $Z_{\epsilon}(x,y,0) \le -\epsilon < 0$. We focus on the case $x\ne y$ and $t>0$.\\

\textbf{Step 2: Maximum principle and time derivative.} To generate a contradiction, suppose there exists an $\epsilon_{0}>0$ such that $Z_{\epsilon_{0}} \ge 0$ at some point. By compactness and continuity, there exists a first time $t_{0}>0$ and distinct points $x_{0}, y_{0} \in I$ such that $Z_{\epsilon_{0}}(x,y,t)\le 0$ on $I\times I\times[0,t_{0}]$, with equality at the point $(x_{0},y_{0},t_{0})$.

Since $Z_{\epsilon_0}(x_0,y_0,t)\leq Z_{\epsilon_0}(x_0,y_0, t_0)$ for $t\leq t_0$, the time derivative of $Z_{\epsilon_0}$ at $(x_0,y_0,t_0)$ satisfies the inequality
% More explicitly, suppose that Z=Z_{\epsilon_0}>0 at some time t'>0 and distinct points x_0, y_0. Since [0, t'] is compact and Z(x_0, y_0, t) is continuous in the time variable, there is a first time 0<t_0<t' such that Z(x_0, y_0, t_0)=0, and Z(x, y, t)<=0 for all t<= t_0.

% Time derivation of Z
\begin{equation}\label{eqn_Z_time_derivative1}
0 \leq \frac{\partial Z_\epsilon}{\partial t}\Big|_{(x_0,y_0,t_0)}=u_t(y,t)-u_t(x,t)-2\varphi_t\left(\frac{|x_-y|}{2},t\right)-\epsilon e^t\Big|_{(x_0,y_0,t_0)}.
\end{equation}
To simplify notation, let $\epsilon:=\epsilon_0$, $x:=x_0, y:=y_0, s:=|x-y|/2$, $Z:=Z_{\epsilon_0}$, and suppress time dependence in $u$, $\varphi$, and $Z$. Substituting equation (\ref{eqn_non-local_heat_regional}) into inequality (\ref{eqn_Z_time_derivative1}) gives
\begin{align}\label{eqn_Z_time_derivative2}
0\leq \int_{I}\rho(z-y)(u(z)-u(y))dz-\int_{I}\rho(z-x)(u(z)-u(x))dz-2\varphi_t(s)-\epsilon e^t.
\end{align}

% Analyse kernel \rho
Since the kernel $\rho$
% For $s\in (0,1)$, the singular kernel
% \begin{equation}\label{eqn_kernal}
% \rho(z)=\frac{1}{|z|^{1+2s}}, z\neq 0, 
% \end{equation}
is non-negative, symmetric, and measurable, 
% there exists an increasing sequence of non-negative simple functions that converges point-wise to $\rho$. As each simple function is a linear combination of indicator functions, it suffices to prove the theorem for indicator functions; the result for arbitrary $\rho$ follows by applying the monotone convergence theorem and the linearity property of the Lebesgue integral. 
% % In other words, we are using algebraic induction
% % As the integral is linear, if the result is true for any indicator function, it will also be true for a linear combination of indicator functions. Since we have that \{s_i\} is a non-negative increasing sequence of simple functions which converges point-wise to \rho, by the MCT, we have that \rho is also non-negative and measurable, and \int \rho(z-y)(u(z)-u(y))dz=\lim_{i\to\infty}\int s_i(z-y)(u(z)-u(y))dz.
it suffices to prove the theorem for $\rho$ having the form
\begin{equation}\label{eqn_indicator}
\rho(z)=\frac{1}{2\delta}\chi_{[-\delta,\delta]}(z),
\end{equation}
where $\chi_{[-\delta,\delta]}$ denotes the indicator function on the set $[-\delta,\delta]$ in $\R$. After simplifying inequality (\ref{eqn_Z_time_derivative2}) using equation (\ref{eqn_indicator}), we obtain 
\begin{align}\label{eqn_Z_time_derivative3}
0 &\leq \frac{1}{2\delta}\int_{I}\chi_{[y-\delta,y+\delta]}(z)(u(z)-u(y))dz-\frac{1}{2\delta}\int_{I}\chi_{[x-\delta,x+\delta]}(z)(u(z)-u(x))dz\nonumber\\
&-2\varphi_t(s)-\epsilon e^t.
\end{align}

% Strategy
We analyse inequality (\ref{eqn_Z_time_derivative3}) by considering the configuration of the intervals $(x-\delta,x+\delta)$ and $(y-\delta,y+\delta)$ relative to the domain $I$. We first consider the `interior cases', where both intervals are strictly contained within $I$, and then the `boundary cases', where at least one interval intersects $I^c$. Without loss of generality (w.l.o.g.), assume $x<y$.\\

\textbf{Step. 3: Interior cases.} Assume $x,y\in I$ such that the intervals $(x-\delta,x+\delta)$ and $(y-\delta,y+\delta)$ are strictly contained within $I$. There are two cases to consider: $|x-y|\geq 2\delta$ and $|x-y|<2\delta$.\\ 

% See Figures \ref{fig_1d_mod_cont_nooverlap} and \ref{fig_1d_mod_cont_overlap}. 

% % Figure: (a.) $|x-y|\geq 2\delta$ and (b.) $|x-y|<2\delta$. 
% \begin{figure}
% \centering
% \includegraphics[page=1]{Tikz_figures}
% \caption{$|x-y|\geq 2\delta$}
% \label{fig_1d_mod_cont_nooverlap}
% \end{figure}
% % Overlap
% \begin{figure}
% \centering
% \includegraphics[page=2]{Tikz_figures}
% \caption{$|x-y|<2\delta$}
% \label{fig_1d_mod_cont_overlap}
% \end{figure}

% Case |x-y|>=2\delta
First suppose that $|x-y|\geq 2\delta$. Integration gives
\begin{align*}
0&\leq \frac{1}{2\delta}\int_{y-\delta}^{y+\delta}u(z)-u(y)dz-\frac{1}{2\delta}\int_{x-\delta}^{x+\delta}u(z)-u(x)dz-2\varphi_t(s)-\epsilon e^t\\
&= \frac{1}{2\delta}\int_{-\delta}^{\delta}u(y+r)-u(y)dr-\frac{1}{2\delta}\int_{-\delta}^{\delta}u(x-r)-u(x)dr-2\varphi_t(s)-\epsilon e^t\\
&=\frac{1}{2\delta}\int_{-\delta}^{\delta}u(y+r)-u(x-r)dr-\frac{1}{2\delta}\int_{-\delta}^{\delta}u(y)-u(x)dr-2\varphi_t(s)-\epsilon e^t\\
&= \frac{1}{2\delta}\int_{-\delta}^{\delta}u(y+r)-u(x-r)-(u(y)-u(x))dr-2\varphi_t(s)-\epsilon e^t\\
&\leq \frac{1}{2\delta}\int_{-\delta}^{\delta}2\varphi\left(\frac{|x-y|}{2}+r\right)-2\varphi\left(\frac{|x-y|}{2}\right)dr-2\varphi_t(s)-\epsilon e^t\\
&=2\tilde{\varphi}_t(s)-2\tilde{\varphi}_t(s)-\epsilon e^t\nonumber\\
&=-\epsilon e^t\\
&<0,
\end{align*}
a contradiction. The first equality follows by applying the changes of variables $z=y+r$ and $z=x-r$.
% then using the evenness of $\rho$ under the integral:
% % Rotational symmetry of \rho
% \begin{equation}\label{eqn_rot_symm_rho}
% \int_\R\rho(r)u(x+r)dr=\int_\R\rho(-r)u(x-r)dr=\int_\R\rho(r)u(x-r)dr, x\in I.
% \end{equation}
The second last inequality results from using the Maximum Principle in Step 2, 
% the maximum principle:
% % Apply maximum principle
% \begin{equation*}\label{eqn_max_principle}
% Z(x,y)\leq 0,
% \end{equation*}
% with equality achieved when $x=x_0$ and $y=y_0$. 
while the last equality comes from the non-local condition (\ref{eqn_non-localcondn_varphi}) on $\tilde{\varphi}$. Hence, $Z_\epsilon<0$.\\ 
    
% Case |x-y|<2\delta
	Next, suppose that $|x-y|<2\delta$. In this case we cannot simply perform a change of variables $z=x+r$ (and $z=y-r$) and then apply the maximum principle to obtain a contradiction. This is because the overlap means $\varphi$ has both positive and negative argument values, so we need to find a way to use its odd extension $\tilde{\varphi}$ in the calculation. 
We first calculate:
\begin{align}\label{eqn_interior_overlap}
0 &\leq \frac{1}{2\delta}\int_{y-\delta}^{y+\delta}u(z)-u(y)dz-\frac{1}{2\delta}\int_{x-\delta}^{x+\delta}u(z)-u(x)dz-2\varphi_t(s)-\epsilon e^t\nonumber\\
&=\frac{1}{2\delta}\int_{-\delta}^{\delta}u(y-r)-u(y)dr-\frac{1}{2\delta}\int_{-\delta}^{\delta}u(x+r)-u(x)dr-2\varphi_t(s)-\epsilon e^t\nonumber\\
&=\frac{1}{2\delta}\left\{\int_{-\delta}^{\frac{y-x}{2}}u(y-r)-u(x+r)dr+\int_{\frac{y-x}{2}}^{\delta}u(y-r)-u(x+r)dr\right\}-\nonumber\\
&\frac{1}{2\delta}\int_{-\delta}^{\delta}u(y)-u(x)dr-
 2\varphi_t(s)-\epsilon e^t\nonumber\\
&=\frac{1}{2\delta}\left\{\int_{-\delta}^{\frac{y-x}{2}}u(y-r)-u(x+r)dr+\int_{y-x-\delta}^{\frac{y-x}{2}}u(x+w)-u(y-w)dw\right\}\nonumber\\
&-\frac{1}{2\delta}\int_{-\delta}^{\delta}u(y)-u(x)dr-
2\varphi_t(s)-\epsilon e^t\nonumber\\ 
&=\frac{1}{2\delta}\int_\R(\chi_{L_2}-\chi_{L_1})(r)(u(y-r)-u(x+r))dr-\frac{1}{2\delta}\int_{-\delta}^{\delta}u(y)-u(x)dr\nonumber\\
&-2\varphi_t(s)-\epsilon e^t\nonumber\\
&\leq \frac{1}{2\delta}\int_\R(\chi_{L_2}-\chi_{L_1})(r)\left(2\varphi\left(\frac{y-x}{2}-r\right)+\epsilon e^t\right)dr\nonumber\\
&-\frac{1}{2\delta}\int_{-\delta}^{\delta}2\varphi\left(\frac{y-x}{2}\right)+\epsilon e^tdr-2\varphi_t(s)-\epsilon e^t.
% &=\frac{1}{2\delta}\int_{-\delta}^{\frac{y-x}{2}}2\varphi\left(\frac{y-x}{2}-r\right)+\epsilon e^tdr-\frac{1}{2\delta}\int_{y-x-\delta}^{\frac{y-x}{2}}2\varphi\left(\frac{y-x}{2}-r\right)\nonumber\\
% &+\epsilon e^tdr-\frac{1}{2\delta}\int_{-\delta}^{\delta}2\varphi\left(\frac{y-x}{2}\right)+\epsilon e^tdr-2\varphi_t(s)-\epsilon e^t\\
% &=\frac{1}{2\delta}\int_{-\delta}^{\frac{y-x}{2}}2\varphi\left(\frac{y-x}{2}-r\right)+\epsilon e^tdr-\frac{1}{2\delta}\int_{\frac{y-x}{2}}^{\delta}2\varphi\left(-\left(\frac{y-x}{2}-w\right)\right)\\
% &+\epsilon e^tdw-\frac{1}{2\delta}\int_{-\delta}^{\delta}2\varphi\left(\frac{y-x}{2}\right)+\epsilon e^tdr-2\varphi_t(s)-\epsilon e^t\\
% &=\frac{1}{2\delta}\int_{-\delta}^{\delta}2\tilde\varphi\left(\frac{y-x}{2}-r\right)-2\tilde\varphi\left(\frac{y-x}{2}\right)dr-2\tilde\varphi_t(s)-\epsilon e^t\\
% &=\frac{1}{2\delta}\int_{-\delta}^{\delta}2\tilde\varphi\left(\frac{y-x}{2}+w\right)-2\tilde\varphi\left(\frac{y-x}{2}\right)dw-2\tilde\varphi_t(s)-\epsilon e^t\\
% &= 2\tilde\varphi_t-2\tilde\varphi_t-\epsilon e^t\\
% %=&-\epsilon e^t\\
% &<0,
\end{align}
Decomposing the new integration range $[-\delta,\delta]$ into disjoint sets $[-\delta,\frac{y-x}{2})$ and $(\frac{y-x}{2},\delta]$ in the second equality was motivated by the aim of introducing $\tilde{\varphi}$ into the calculation. The third equality follows by applying the change of variables $x+r=y-w$ to the second integral, where $w\in L_1:=[y-x-\delta,\frac{y-x}{2}]\subset[-\delta,\frac{y-x}{2}]=:L_2$. The final inequality comes from the fact that, since $L_1\subset L_2$, $\chi_{L_2}(r)-\chi_{L_1}(r)\geq 0$ and $\frac{|y-x-2r|}{2}=\frac{y-x-2r}{2}\geq 0$ on $L_2$, so we may apply the maximum principle. 

For ease of analysis, we now separate the terms involving $\varphi$, denoted by $T_\varphi$, and the penalty terms, represented by $E$. The terms involving $\varphi$ cancel as follows:

\begin{align*}
    T_\varphi&=\frac{1}{2\delta}\int_\R(\chi_{L_2}-\chi_{L_1})(r)2\varphi\left(\frac{y-x}{2}-r\right)dr\\
&-\frac{1}{2\delta}\int_{-\delta}^{\delta}2\varphi\left(\frac{y-x}{2}\right)dr-2\varphi_t(s)\\
    &=\frac{1}{2\delta}\int_{-\delta}^{\frac{y-x}{2}}2\varphi\left(\frac{y-x}{2}-r\right)dr-\frac{1}{2\delta}\int_{y-x-\delta}^{\frac{y-x}{2}}2\varphi\left(\frac{y-x}{2}-r\right)dr\nonumber\\
&-\frac{1}{2\delta}\int_{-\delta}^{\delta}2\varphi\left(\frac{y-x}{2}\right)dr-2\varphi_t(s)\\
&=\frac{1}{2\delta}\int_{-\delta}^{\frac{y-x}{2}}2\varphi\left(\frac{y-x}{2}-r\right)dr-\frac{1}{2\delta}\int_{\frac{y-x}{2}}^{\delta}2\varphi\left(-\left(\frac{y-x}{2}-w\right)\right)dw\\
&-\frac{1}{2\delta}\int_{-\delta}^{\delta}2\varphi\left(\frac{y-x}{2}\right)dr-2\varphi_t(s)\\
&=\frac{1}{2\delta}\int_{-\delta}^{\delta}2\tilde\varphi\left(\frac{y-x}{2}-r\right)-2\tilde\varphi\left(\frac{y-x}{2}\right)dr-2\tilde\varphi_t(s)\\
&=\frac{1}{2\delta}\int_{-\delta}^{\delta}2\tilde\varphi\left(\frac{y-x}{2}+w\right)-2\tilde\varphi\left(\frac{y-x}{2}\right)dw-2\tilde\varphi_t(s)\\
&= 2\tilde\varphi_t-2\tilde\varphi_t\\
% %=&-\epsilon e^t\\
&=0.
\end{align*}
The change of variables $x+r=y-w$ was applied to the second integral in the third equality, followed by the definition of $\tilde\varphi$, and then the non-local condition (\ref{eqn_non-localcondn_varphi}) in the second last equality.

The penalty terms give
\begin{align*}
    E&=\frac{1}{2\delta}\int_\R(\chi_{L_2}-\chi_{L_1})(r)\epsilon e^tdr-\frac{1}{2\delta}\int_{-\delta}^{\delta}\epsilon e^tdr-\epsilon e^t\\
    &=\frac{\epsilon e^t}{2\delta}(|L_2|-|L_1|-2\delta)-\epsilon e^t\\
    &=\frac{\epsilon e^t}{2\delta}(s+\delta-(\delta-s)-2\delta)-\epsilon e^t\\
    &=\frac{\epsilon e^t}{\delta}(s-\delta)-\epsilon e^t\\
    &=\epsilon e^t\left(\frac{s}{\delta}-2\right)\\
    &<0,
\end{align*}
since we assume $|x-y|<2\delta$, or $s<\delta$, which means $\frac{s}{\delta}<1$. 

Substituting $T_\varphi$ and $E$ back into inequality (\ref{eqn_interior_overlap}) yields the contradiction.
\\

%\end{proof}

% Restriction to I
%Lemma (\ref{lem_1d_mod_cont_reg}) implies that $Z_\epsilon<0$ for $u:\R\times[0,\infty)\to\R$ restricted to $I$. 
\textbf{Step 4: Boundary cases.} To complete the proof of Theorem \ref{thm_1d_mod_cont_reg}, we now consider the cases where at least one of the intervals intersects $I^c$; namely, one of the intervals is contained in $I$ and the other has non-zero intersection with $I$ and its complement $I^c$; or both intervals have non-zero intersection with $I$ and $I^c$. The argument for the first case turns out to be analogous to that of the second case, we only need to consider the second case in detail.

% Symmetric case
First, suppose the intervals $[x-\delta,x+\delta]$ and $[y-\delta,y+\delta]$ have non-zero intersection with $I$ and $I^c$ in a symmetric way, i.e., x=-y with the midpoint between $x$ and $y$ occurring at zero; see Figures \ref{reg_res_symm_no_overlap} and \ref{reg_res_symm_overlap}. If $|x-y|\geq 2\delta$, a change of variables $z=y+r$ (or $z=x-r$ in the second integral) implies that $r\in[-\delta,\frac{D}{2}-y]$, and, by symmetry, this is the transformed integration interval for both integrals. The argument is then the same as that presented in step (2.) for the corresponding case but with respect to the new integration intervals; viz.,
% Asymm: no overlap 
\begin{figure}
\centering
\includegraphics[page=4]{Tikz_figures}
\caption{$|x-y|\geq 2\delta$}
\label{reg_res_symm_no_overlap}
\end{figure}
% Asymm: overlap
\begin{figure}
\centering
\includegraphics[page=5]{Tikz_figures}
\caption{$|x-y|<2\delta$}
\label{reg_res_symm_overlap}
\end{figure}

\begin{align*}
0&\leq \frac{1}{2\delta}\int_I\chi_{[y-\delta,y+\delta]}u(z)-u(y)dz-\frac{1}{2\delta}\int_I\chi_{[x-\delta,x+\delta]}u(z)-u(x)dz-2\varphi_t(s)\\
&-\epsilon e^t\\
&= \frac{1}{2\delta}\int_{-\delta}^{\frac{D}{2}-y}u(y+r)-u(y)dr-\frac{1}{2\delta}\int_{-\delta}^{\frac{D}{2}-y}u(x-r)-u(x)dr-2\varphi_t(s)\\
&-\epsilon e^t\\
%&=\frac{1}{2\delta}\int_{-\delta}^{\delta}u(y+r)-u(x-r)dr-\frac{1}{2\delta}\int_{-\delta}^{\delta}u(y)-u(x)dr-2\varphi_t(s)-\epsilon e^t\\
&= \frac{1}{2\delta}\int_{-\delta}^{\frac{D}{2}-y}u(y+r)-u(x-r)-(u(y)-u(x))dr-2\varphi_t(s)-\epsilon e^t\\
&\leq \frac{1}{2\delta}\int_{-\delta}^{\frac{D}{2}-y}2\varphi\left(\frac{|x-y|}{2}+r\right)-2\varphi\left(\frac{|x-y|}{2}\right)dr-2\varphi_t(s)-\epsilon e^t\\
&=2\tilde{\varphi}_t(s)-2\tilde{\varphi}_t(s)-\epsilon e^t\nonumber\\
&=-\epsilon e^t\\
&<0,
\end{align*}   
a contradiction. The case $|x-y|<2\delta$ can be handled similarly, by changing the intervals of integration in the analogous case in step (2.). 
% Checked latter claim on 10/012019 in notes.

% Asymmetric case
Next, suppose the intervals $[x-\delta,x+\delta]$ and $[y-\delta,y+\delta]$ have non-zero intersection with $I$ and $I^c$ in an asymmetric way. For the sake of argument, assume $x$ is closer to $-D/2$ than $y$ is to $D/2$. 
%(The same argument applies in reverse.) 
We employ the following strategy: First, translate the picture so that the mid-point $m:=\frac{x+y}{2}$ is at zero. See Figures \ref{reg_res_asymm_pretrans} and \ref{reg_res_asymm_posttrans}. Second, write the integrals in terms of symmetric and asymmetric parts. The symmetric part will cancel out by the foregoing argument (irrespective of whether the intervals $[x-\delta,x+\delta]$ and $[y-\delta,y+\delta]$ overlap even though Figures \ref{reg_res_asymm_pretrans} and \ref{reg_res_asymm_posttrans} only show the non-overlapping case), leaving only the asymmetric part to consider. 

We thus calculate

% Pre-translation 
\begin{figure}
\centering
\includegraphics[page=6]{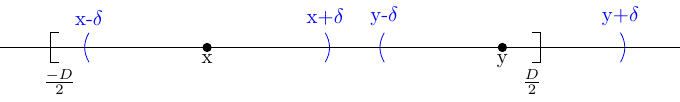}
\caption{Original asymmetric configuration}
\label{reg_res_asymm_pretrans}
\end{figure}
% Post-translation
\begin{figure}
\centering
\includegraphics[page=7]{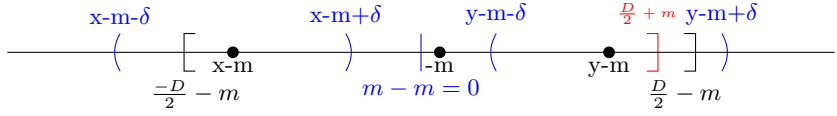}
\caption{Configuration after translation by $m$ units to the right. Notice that $m<0$.}
\label{reg_res_asymm_posttrans}
\end{figure}

\begin{align*}
0&\leq\frac{1}{2\delta}\int_I\chi_{[y-\delta,y+\delta]}(u(z)-u(y))dz\\
&-\frac{1}{2\delta}\int_I\chi_{[x-\delta,x+\delta]}(u(z)-u(x))dz-2\varphi_t(s)-\epsilon e^t\\
&=\frac{1}{2\delta}\int_{y-m-\delta}^{\frac{D}{2}-m}u(z)-u(y-m)dz-\frac{1}{2\delta}\int_{-\frac{D}{2}-m}^{x-m+\delta}u(z)-u(x-m)dz\\
&-2\varphi_t(s)-\epsilon e^t\\
&=\frac{1}{2\delta}\int_{-\delta}^{\frac{D}{2}+2m-y}u(y-m+r)-u(y-m)dr\\
&-\frac{1}{2\delta}\int_{-\delta}^{\frac{D}{2}+2m-y}u(x-m-r)-u(x-m)dr\\
&+\frac{1}{2\delta}\int_{\frac{D}{2}+2m-y}^{\frac{D}{2}-y}u(y-m+r)-u(y-m)dr-2\varphi_t(s)-\epsilon e^t\\
&\leq\frac{1}{\delta}\int_{-\delta}^{\frac{D}{2}+2m-y}\varphi(s+r)-\varphi(s)dr-2\varphi_t(s)\\
&+\frac{1}{2\delta}\int_{\frac{D}{2}+2m-y}^{\frac{D}{2}-y}u(y-m+r)-u(y-m)dr-\epsilon e^t\\
&=\frac{1}{2\delta}\int_{\frac{D}{2}+2m-y}^{\frac{D}{2}-y}u(y-m+r)-u(y-m)dr\\
&-\frac{1}{\delta}\int_{\frac{D}{2}+2m-y}^{\frac{D}{2}-y}\varphi(s+r)-\varphi(s)dr-\epsilon e^t.\\
&\leq\frac{1}{2\delta}\int_{\frac{D}{2}+2m-y}^{\frac{D}{2}-y}u(y-m+r)-2\varphi\left(\frac{|x-y|}{2}\right)-\epsilon e^t-u(x-m)dr\\
&-\frac{1}{\delta}\int_{\frac{D}{2}+2m-y}^{\frac{D}{2}-y}\varphi\left(\frac{|x-y|}{2}+r\right)-\varphi\left(\frac{|x-y|}{2}\right)dr
-\epsilon e^t\\
&\leq\frac{1}{\delta}\int_{\frac{D}{2}+2m-y}^{\frac{D}{2}-y}\varphi\left(\frac{|x-y|}{2}+r/2\right)-\varphi\left(\frac{|x-y|}{2}+r\right)dr-\epsilon e^t\\
&\leq -\epsilon e^t\\
&<0,
\end{align*}
producing a contradiction. To obtain the second equality, we applied the change of variables $z=y-m+r$ and separated the integrals into symmetric and asymmetric components. We arrive at the second inequality by arguing in much the same way as for the symmetric case(s) considered previously but instead use the translated maximum principle: $Z(x-m,y-m)\leq0$, i.e.,
\begin{align}\label{eqn_restriction}
u(y-m)-u(x-m)\leq 2\varphi\left(\frac{|y-x|}{2}\right)+\epsilon e^t.
\end{align}
The final equality comes about by observing that after translating the original picture by $m$ units to the right (or left), $y-m=\frac{y-x}{2}=s$, so we can rewrite $\varphi_t(s)$ with respect to the same integration intervals as the other integrals. To complete the calculation, we apply the translated maximum principle again, and recall that $\varphi$ is a non-negative monotone increasing function. 

It thus follows that $Z_\epsilon<0$ for each $\epsilon>0$, so the theorem is proved.
\end{proof}

%%% Generalisation %%%
\subsection{General case}\label{sec_regional_gen}
Theorem \ref{thm_1d_mod_cont_reg} does not appear to generalise to bounded convex domains in higher dimensions. If it were to hold, then by adapting the proof of the classical Payne-Weinberger inequality using a modulus of continuity argument (reviewed below), we would be forced to conclude that the first non-trivial eigenvalue of the regional fractional Laplacian on a bounded convex domain
% \begin{align*}
%     L_\Omega u(x):=\int_{\Omega}\rho(z-x)(u(z)-u(x))dz, x\in\Omega,
% \end{align*}
is optimally bounded below by a quantity that depends only on a geometric invariant (under rigid transformations) of the corresponding one-dimensional problem on a compact line interval, i.e., the length of the interval. On the other hand, it is possible to construct a counterexample to show the first nontrivial eigenvalue cannot only depend on the diameter of the domain. 

We first consider the counterexample, followed by showing that this would appear to lead to a contradiction if the generalisation of the modulus of continuity theorem were also to hold.

% Counterexamples
\subsubsection{Counterexample}
It well known that the regional fractional Laplacian has a discrete increasing spectrum of non-negative eigenvalues with corresponding eigenfunctions that form an orthonormal basis for $L^2(\Omega)$.
%; see, for instance, \textcolor{red}{[Appendix of thesis once published on ArXiv]}. 
Since the first eigenvalue is simple, taking value zero with eigenfunction $1$, the first non-trivial eigenvalue $\lambda_2$ can be defined using a variational formulation by
\begin{align}
    0<\lambda_2&=\min_{\substack{u\in H^s(\Omega)\setminus\{0\}\\ \langle u, 1\rangle_{L^2(\Omega)}=\int_\Omega u=0}}\bigg\{\frac{\mathcal{E}_{reg}[u,u]}{\|u\|_{L^2(\Omega)}}\bigg\},
    %&\leq \frac{\mathcal{E}_{reg}[u,u]}{\|u\|_{L^2(\Omega)}},
\end{align}
where $u$ is an admissible function and $\mathcal{E}_{reg}[\cdot,\cdot]$ is defined as
\begin{align*}
    \mathcal{E}_{reg}[u,u]=\frac{c_n}{2}\iint\limits_{\Omega\times\Omega}\frac{(u(x)-u(y))^2}{|x-y|^{n+2s}}dxdy, x\in\Omega.
\end{align*} 

Let $\Omega=[0,L]\times [-\epsilon,\epsilon]$, where $0<L<<\epsilon$, and $u(x,y)=y$. We compute
\begin{align}
\int_{\Omega}u(x,y)dxdy=\int_0^L\int_{-\epsilon}^\epsilon ydydx=0.
\end{align} 
Moreover, letting $p=(x_1,y_1),q=(x_2,y_2)\in [0,L]\times[-\epsilon,\epsilon]$, and observing that
\begin{align}
    (y_1-y_2)^2\leq((x_1-x_2)^2+(y_1-y_2)^2)^{1+s}=|p-q|^{2+2s}
\end{align}
for all $s$ in $(0,1)$, we additionally have 
\begin{align}\label{reg_counterex1}
    \mathcal{E}_{reg}[u,u]=\int_\Omega\int_\Omega\frac{(u(p)-u(q))^2}{|p-q|^{2+2s}}dpdq&=\int_\Omega\int_\Omega\frac{(y_1-y_2)^2}{|p-q|^{2+2s}}dpdq\nonumber\\
    %&\leq\int_\Omega\int_\Omega\frac{(y_1-y_2)^2}{((x_1-x_2)^2+(y_1-y_2)^2)^{1+s}}dx_1dx_2dy_1dy_2\nonumber\\
    &\leq\int_\Omega\int_\Omega 1 dpdq\nonumber\\
    &=4L^2\epsilon^2.
\end{align}
Thus $\int_\Omega u=0$ and $u\in H^s(\Omega)$, so $u$ as an admissible function. 

Now, noting that
\begin{align}
    \|u\|_{L^2(\Omega}^2&=\int_\Omega u(x,y)^2dxdy\nonumber\\
    &=\int_0^L\int_{-\epsilon}^\epsilon y^2dydx\nonumber\\
    &=\frac{2L\epsilon^3}{3},
\end{align}
and $\mathcal{E}_{reg}(u,u)\propto L^2\epsilon^2$ using inequality (\ref{reg_counterex1}), we see that for any fixed $\epsilon\gg L>0$,
\begin{align}
    0<\lambda_2\leq \frac{\mathcal{E}_{reg}(u,u)}{\|u\|^2_{L^2(\Omega)}}\leq \frac{C L^2\epsilon^2}{L\epsilon^3}=\frac{CL}{\epsilon}\to 0^+, L\to 0^+,
\end{align}
where $C$ is a constant that depends on $s$ (since $n=1$). Thus, if $B(\epsilon, L)$ is any positive lower bound for $\lambda_2$ that only depends on the diameter of $[0,L]\times[-\epsilon,\epsilon]$, which itself depends on $\epsilon$ and $L$, then in the limit $L\to 0^+$, i.e., where the thin and long rectangular domain $[0,L]\times[-\epsilon,\epsilon]$ collapses to the compact line interval $[-\epsilon,\epsilon]$, we end up with a contradiction:
\begin{align}
    0<B(\epsilon)\leq\lambda_2\leq 0.
\end{align}
This suggests that, in general, the optimal lower bound for $\lambda_2$ cannot just depend on the diameter of the domain.

% non-local Payne-Weinberger inequality
\subsubsection{Non-local Payne-Weinberger inequality}
In higher dimensions, the corresponding modulus of continuity conjecture for solutions of the non-local regional heat equation on a bounded convex domain is as follows:

% Modulus of continuity conjecture in higher dimension    
\begin{conj}[Non-local modulus of continuity on a bounded convex domain]\label{thm_nd_mod_cont_reg}
Let $\Omega$ be a bounded convex domain in $\R^n$ with diameter $D$. Define a rectangular coordinate system with the $z_1$-axis along the compact line interval $I:=[-D/2, D/2]$. For each $z_1$ in $I$, we define the $z_1$-slice of $\Omega$ as
% x_n slices of \Omega
\begin{equation}\label{eqn_w_slice_Omega}
\Omega_{z_1}:=\{p\in\R^{n-1}: (z_1,p)\in\Omega\}.
\end{equation}
Suppose $u:\Omega\times[0,\infty)\to\R$ satisfies the non-local regional heat equation (\ref{eqn_non-local_heat_regional}), and the function $\varphi$ is as given in Theorem \ref{thm_1d_mod_cont_reg}, but its odd extension $\tilde\varphi$ satisfies the one-dimensional regional heat equation
% non-local condition for \varphi
\begin{align}\label{eqn_non-localcondn_varphi_general2}
\tilde\varphi_t(r,t)&=\int_I\tilde\rho(w-r)(\tilde\varphi(w)-\tilde\varphi(r))dw\nonumber, r\in I, t>0,\\
\end{align}
where $\tilde\rho(z_1)=\int_{\Omega_{z_1}}\rho(z_1,p)dp$. Then $\varphi(\cdot,t)$ is a modulus of continuity for $u(\cdot,t)$ on $\Omega\times [0,\infty)$.
\end{conj}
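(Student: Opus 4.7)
The plan is to adapt the coupling-by-reflection argument from Theorem \ref{thm_mod_cont_Rn} to the bounded convex setting, drawing on the asymmetric slicing handled in step (3.) of Theorem \ref{thm_1d_mod_cont_reg}. Define $Z_\epsilon$ on $\Omega\times\Omega\times[0,\infty)$ exactly as in (\ref{eqn_mod_cont_auxillary_n}), verify $Z_\epsilon<0$ on the trivial portion of the parabolic boundary using properties (a) and (b) (since $\Omega$ is bounded, no ``large magnitude'' case needs to be treated separately), and assume for contradiction that some $Z_{\epsilon_0}$ attains zero for the first time at $t_0>0$ and distinct points $x_0,y_0\in\Omega$. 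At this contact point, $\partial_t Z_{\epsilon_0}\geq 0$, and substituting (\ref{eqn_non-local_heat_regional}) yields
\begin{align*}
0\leq{}& \int_{\Omega}\rho(z-y_0)(u(z)-u(y_0))\,dz-\int_{\Omega}\rho(z-x_0)(u(z)-u(x_0))\,dz\\
&{}-2\varphi_t(s)-\epsilon_0 e^{t_0}(1+|x_0|^2+|y_0|^2),
\end{align*}
where $s=|y_0-x_0|/2$.

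The next step is to choose coordinates so that $e_1=(y_0-x_0)/|y_0-x_0|$ and perform the changes of variables $z=x_0+r$ and $z=y_0+Rr$ as in the proof of Theorem \ref{thm_mod_cont_Rn}, with $R$ the reflection across the perpendicular bisector $H$ of $\overline{x_0 y_0}$. One then partitions the integration region via the half-spaces $L$ and $R$ of (\ref{eqn_reflctn_integ2}), pairs $L$ with $R$ using the map $g$ of (\ref{eqn_geom_changevar}), and invokes the rotational symmetry and monotonicity of $\rho$ to obtain $\rho(r)-\rho(g(r))\geq 0$ on the overlapping portion. After applying the contact inequality to couple the $x_0$- and $y_0$-integrals, integrating out the $n-1$ coordinates perpendicular to $e_1$ slice-by-slice collapses $\rho$ to $\tilde\rho(z_1)=\int_{\Omega_{z_1}}\rho(z_1,p)\,dp$. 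The oddness trick of Theorem \ref{thm_mod_cont_Rn}, together with the non-local condition (\ref{eqn_non-localcondn_varphi_general2}) on $\tilde\varphi$, should then replace $2\varphi_t(s)$ by $2\tilde\varphi_t(s)$ and force the cancellation leading to the contradiction $0\leq -\epsilon_0 e^{t_0}(1+|x_0|^2+|y_0|^2)<0$.

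The principal obstacle is the second step: because $\Omega$ is merely convex, the reflection $R$ need not map $(\Omega-x_0)\cap L$ into $(\Omega-y_0)\cap R$, so the pairing $g$ leaves an asymmetric remainder exactly analogous to the one handled at the end of step (3.) of Theorem \ref{thm_1d_mod_cont_reg}. In one dimension that remainder could be absorbed using monotonicity of $\varphi$ together with the translated maximum principle (\ref{eqn_restriction}), but in higher dimensions the remainder is spread over an $(n-1)$-dimensional slice and couples nontrivially with the varying shape of $\Omega_{z_1}$; in particular, the total perpendicular mass of $\rho$ over $\Omega_{z_1}$ is not the same as the total perpendicular mass over $\Omega_{z_1'}$ for a reflected slice $z_1'$. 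The counterexample of Section \ref{sec_regional_gen} strongly suggests that monotonicity of $\rho$ and $\varphi$ alone cannot close this gap in full generality: the eigenvalue $\lambda_2$ can be driven to zero on thin-and-long domains while the 1D reduction via $\tilde\rho$ need not similarly degenerate. I therefore expect the plan to go through only under additional structural hypotheses on $\Omega$---such as reflective symmetry about $H$, or log-concavity of the slice widths $|\Omega_{z_1}|$ along the $e_1$-axis---consistent with the authors' own reservations about the conjecture.
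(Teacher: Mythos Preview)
Your proposal is well-aligned with the paper's own treatment, but note that the statement is a \emph{conjecture}, not a theorem: the paper does not prove it and in fact argues, via the counterexample in Section~\ref{sec_regional_gen}, that it is likely false in dimensions $n\geq 2$. Your outline correctly traces the natural coupling-by-reflection strategy from Theorems~\ref{thm_mod_cont_Rn} and~\ref{thm_1d_mod_cont_reg}, and---crucially---you have put your finger on exactly the obstruction the authors have in mind: the reflection $R$ need not carry $(\Omega-x_0)$ into $(\Omega-y_0)$, so the asymmetric remainder handled in step~(3.) of Theorem~\ref{thm_1d_mod_cont_reg} by monotonicity of $\varphi$ and the translated maximum principle~(\ref{eqn_restriction}) acquires genuine $(n-1)$-dimensional structure through the varying slices $\Omega_{z_1}$, and the perpendicular masses $\int_{\Omega_{z_1}}\rho(z_1,p)\,dp$ do not match across reflected slices.

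Your closing assessment---that the argument should only close under extra symmetry hypotheses on $\Omega$, and that the thin-rectangle counterexample shows monotonicity alone cannot suffice---is precisely the paper's conclusion. So there is nothing to correct in your reasoning; just be explicit that what you have written is an explanation of \emph{why the conjecture resists proof} rather than a proof, since the paper itself offers no proof and instead uses the putative consequence (a diameter-only lower bound on $\lambda_2$) together with the counterexample to cast doubt on the conjecture.
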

Taking $\rho$ to the be singular kernel that defines the regional fractional Laplacian $(-\Delta)^s_{reg}$, this conjecture implies that the first non-trivial eigenvalue $\lambda_2$ of $(-\Delta)^s_{reg}$ on a bounded convex set $\Omega$ in $\R^n$ is bounded below by the first non-trivial eigenvalue $\tilde{\lambda}_2$ of the limiting one-dimensional model (i.e., where all dimensions except one go to zero) on the compact line interval $I$. So the conjecture would suggest that
\[
\lambda_2\geq\tilde\lambda_2=\tilde{\lambda}_2(I)>0.
\]
%\textcolor{red}{[Mention the same result also holds more generally for any bounded, nonnegative, symmetric $\rho$ defining a non-local Neumann problem as described in chapter 1, and briefly explain why we'll end up with the requisite evalues / efunction theorem in order to argue what follows in this more general case (easier to show compactness etc.). The choice of $(-\Delta)^s_{ref}$ is influenced by its connection with alpha symmetric processes, whose generator is the fractional Laplacian; and its not as obvious that the result being described should be true for operators with singular kernels (because the existence of a discrete spectrum of evalues etc. isn't as obvious.]}

To see this, we argue in a similar way as was done in the classical case \cite{A1}: let $u_2$ be an eigenfunction corresponding to $\lambda_2$ so that
\begin{align}
    u(x,t)=u_2(x)e^{-\lambda_2 t},\qquad x\in\Omega, t>0
\end{align}
is a solution to the regional heat equation (\ref{eqn_non-local_heat_regional}) with initial condition $u(x,0)=u_2(x)$. Suppose that $\varphi(s,0)$ is an initial modulus of continuity for $u(x,0)$. As already mentioned, $(-\Delta)^s_{reg}$ on the compact line interval $I$ has a discrete and increasing sequence of eigenvalues $0=\tilde{\lambda}_1<\tilde{\lambda}_2\leq \tilde{\lambda}_3\leq\ldots$ with corresponding eigenfunctions $\{\tilde{\phi}_i\}_{i=1}^\infty$ that form an orthonormal basis of $L^2(I)$. Thus, $\varphi(s,0)$ has an eigenfunction representation as
\begin{align}\label{reg_payneW_modcon_ini}
    \varphi(s,0)=\sum_{i=1}^\infty\alpha_i\tilde{\phi}_i(s),
\end{align}
where $\{\alpha_i\}_{i\in\N}$ is a sequence of real constants and the right-hand side converges in $L^2(I)$. By conjecture \ref{thm_nd_mod_cont_reg}, $\varphi:[0,D/2]\times [0,\infty)\to[0,\infty)$ is a modulus of continuity for $u(\cdot,t)$. Since $\varphi(\cdot,t)$ satisfies the one-dimensional non-local heat equation (\ref{eqn_non-localcondn_varphi_general2}) with initial condition (\ref{reg_payneW_modcon_ini}), we have that
\begin{align}
    \varphi(s,t)=\sum_{i=1}^\infty \alpha_i\tilde\phi_i(s)e^{-\lambda_i t}.
\end{align}
Moreover, for any $s$ in $[0,D/2]$ and large $t$,
\begin{align}
    |\varphi(s,t)|&\leq \bigg|\sum_{i=1}^\infty \alpha_i\tilde{\phi}_i(s)e^{-\lambda_i t}\bigg|\nonumber\\
    &\leq \sum_{i=1}^\infty |\alpha_i\tilde{\phi}_i(s)| e^{-\lambda_i t}\nonumber\\
    &\leq C |\tilde{\phi_2}(s)|e^{-\tilde{\lambda}_2 t},
\end{align}
for a sufficiently large constant $C$ (recall that the first eigenvalue is zero with eigenfunction one).
% Notice that for each s,t, \varphi(s,t)=\sum \alpha_i\tilde{\phi}_i(s)e^{-\tilde{\lambda}_i}<\infty, so the summand must be getting smaller as i grows, i.e., lim \alpha_i\tilde{\phi}_i(s)e^{-\tilde{\lambda}_i}=0 as i\to\infty. Thus, we can find a bound (that may depend on s, t) for the series: bound the first N terms using a (positive) scalar multiple of the second term (as it dominates the all other terms in the series), while the remaining terms are neglible in the large t limit.
Since $\varphi(\cdot, t)$ is the modulus of continuity for $u(\cdot, t)$, this implies that 
\begin{align}
    |u(x,t)-u(y,t)|\leq 2\varphi(s,t)&\leq E|\tilde{\phi_2}(s)|e^{-\tilde{\lambda}_2 t}\nonumber\\
    &\leq E\sup_{s\in [0,D/2]}|\tilde{\phi}_2(s)|e^{-\tilde{\lambda}_2 t}\nonumber\\
    &=Fe^{-\tilde{\lambda}_2 t}
\end{align}
% [28/04/22] True if we know that the orthonormal efunctions \tilde(phi)_i are continuous, then since the set is compact, it must be bounded.
for all $x,y\in\Omega$ and $t\geq 0$. Thus,
\begin{align}
    \osc{(u(x))}e^{-\lambda_2 t}=\osc{(u(x,t))}\leq Ge^{-\tilde{\lambda}_2 t}
\end{align}
for a sufficiently large positive constant $G$ and $t$ large, and $\operatorname{osc}(u(x))$ denotes oscillation of $u(x)$. However, this is only possible if $\lambda_2\geq \tilde{\lambda}_2=\tilde{\lambda}_2(I)$; i.e., $\lambda_2$ is bounded below by a term that only depends on an intrinsic geometric property of $I$. An obvious candidate for this geometric invariant would be the diameter of $I$, but we showed this is not possible through the preceding counterexample. It thus appears that Theorem \ref{thm_1d_mod_cont_reg} cannot be generalised.

\section{Non-linear non-local heat equation}\label{sec_non_linear_non_local_heat_eqn}
The methods of the proof of Theorem \ref{thm_modulus_Rn} can also be adapted to prove a modulus of continuity theorem for solutions $u:\R^n\times [0,\infty)\to\R$ of the non-linear non-local heat equation
% Eqn: nonlinear, non-local fractional heat equation
% Eqn: nonlinear, non-local fractional heat equation
\begin{equation}\label{eqn_nonlin_fractionalheat_Rn}
 \left\{\begin{aligned}
        u_t(x,t)&=\Lop u(x,t)+q(|Du(x,t)|), & x\in\R^n, t\in(0,\infty)\\
        u(x,0)&=u_0(x), & x\in\R^n,
       \end{aligned}
 \right.
\end{equation}
given suitable constraints on the nonlinear term $q$. 

% THM: nonlinear non-local modulus of continuity
\begin{thm}[Non-linear non-local modulus of continuity on $\R^n$]\label{thm_mod_cont_nonlin}
Suppose $u:\R^n\times[0,\infty)\to\R$ is a smooth solution of equation (\ref{eqn_nonlin_fractionalheat_Rn}) and $\|u\|_{L^\infty(\R^n)}\leq 1$. The non-linear term $q(p)$ (for $p\ge 0$) satisfies the following conditions:
\begin{itemize}
    \item[(i.)] $q(p)$ is non-decreasing and $q(0)=0$; and
    \item[(ii.)] $q(p)$ is at least $C^1$ and satisfies the Lipschitz condition $\left|\frac{d q(p)}{d p}\right| \le c$ for some constant $c>0$.
\end{itemize}
Let $\varphi:[0,\infty)\times[0,\infty)\to[0,\infty)$ be a function with the following properties:
% Properties of \varphi
\begin{enumerate}
\item[(a.)] $\varphi':=\partial_r\varphi(r,t)>0$ for $r>0$ and $\lim\limits_{r\to 0^+}\varphi(r,t)=\varphi(0,t)=0$ for each $t\geq 0$.
\item[(b.)] $\varphi(\cdot,0)$ is a modulus of continuity for $u(\cdot,0)$.
\item[(c.)] $\varphi$ has an odd extension $\tilde{\varphi}$ satisfying the one-dimensional non-local heat equation
% non-local condition for \varphi
\begin{equation}\label{eqn_non-localcondn_varphi_2}
\tilde\varphi_t(r,t)=\LopOneD\tilde{\varphi}(r,t),
\end{equation}
where $\LopOneD$ is the one-dimensional reduction of $\Lop$.
% New Condition (d) incorporating K and K_nabla
\end{enumerate}
Furthermore, assume we can define a suitable regularisation function:
\begin{enumerate}
\item[(d.)] There exists a smooth, radially symmetric function $\Psif:\R^n \to [1,\infty)$ such that $\Psif(x) \to \infty$ as $|x|\to\infty$, satisfying:
    \begin{equation}\label{eq:K_bounds_nonlinear}
        0\le K := \sup_{x\in\R^n} \Lop\Psif(x) < \infty \quad \text{and} \quad K_\nabla := \sup_{x\in\R^n} \frac{|D\Psif(x)|}{\Psif(x)} < \infty.
    \end{equation}
\end{enumerate}
Then $\varphi(\cdot,t)$ is a modulus of continuity for $u(\cdot,t)$ on $\R^n\times[0,\infty)$.
\end{thm}

\begin{proof} 
We want to show that $|u(y,t)-u(x,t)|\leq 2\varphi\left(\frac{|x-y|}{2},t\right)$ for all $x,y\in\R^n$ and $t\geq 0$.\\

\textbf{Step 1: Define the auxiliary function.} 
Let $K$ and $K_\nabla$ be the constants from condition (d.). Choose the time constant $C > K + c K_\nabla$. We define the auxiliary function $Z_{\epsilon}:\R^n\times\R^n\times[0,\infty)\to\R$:
\begin{align*}
    Z_\epsilon(x,y,t)=u(y,t)-u(x,t)-2\varphi\left(\frac{|x-y|}{2},t\right)-\epsilon e^{Ct}(\Psif(x)+\Psif(y)).
\end{align*}
It suffices to show $Z_\epsilon<0$. 

The cases $x=y$ and $t=0$ follow from conditions (a.) and (b.), respectively. Since $\|u\|_\infty \le 1$ and $\Psi(x) \to \infty$ as $|x|\to\infty$, $Z_\epsilon \to -\infty$ as $|x|$ or $|y| \to \infty$. Therefore, $Z_\epsilon$ must attain a global maximum at some finite point.\\

\textbf{Step 2: Maximum principle and time derivative.}
To produce a contradiction, assume that for some $\epsilon_0>0$ there exists a first time $t_0>0$ and distinct points $x_0, y_0$ such that $Z_{\epsilon_0}(x_0,y_0,t_0)=0$ (a global maximum), and $Z_{\epsilon_0} \le 0$ globally for $t\le t_0$.

Simplify notation: Let $\epsilon=\epsilon_0$, $x=x_0, y=y_0, t=t_0$, $s=|x-y|/2$, and $E = \epsilon e^{Ct}$. Also, suppress time dependence in $u$ and $\varphi$.

At the maximum point, the time derivative is non-negative. Substituting the equations for $u_t$ and $\varphi_t$ gives
\begin{align*}
    0\leq \frac{\partial Z_{\epsilon}}{\partial t} =& (\Lop u(y)+q(|Du(y)|)) - (\Lop u(x)+q(|Du(x)|)) \\
    &- 2\varphi_t(s) - CE(\Psif(x)+\Psif(y)).
\end{align*}
Rearranging yields
\begin{equation}\label{eqn_Zt_nonlinear_main}
    0\leq (\Lop u(y) - \Lop u(x)) + T_q - 2\varphi_t(s) - C E(\Psif(x)+\Psif(y)).
\end{equation}
Here $T_q = q(|Du(y)|) - q(|Du(x)|)$.\\

\textbf{Step 3: Coupling-by-reflection and maximum principle bounds.} 
We employ the coupling-by-reflection technique and the maximum principle as used in the proof of Theorem (\ref{thm_modulus_Rn}) to find an upper bound for the linear spatial terms. The details are exactly the same (Steps 3 and 4), so we assume this is done, and arrive at the following inequality:
\begin{align}\label{nonlinear_final_inequality}
    0&\leq (\Lop u(y) - \Lop u(x)- 2\varphi_t) + T_q  - C E(\Psif(x)+\Psif(y))\nonumber\\
    &\leq (2\tilde\varphi_t(s)+2EK-2\tilde\varphi_t(s))+T_q-CE(\Psi(x)+\Psi(y))\nonumber\\
    &=2EK+T_q-CE(\Psi(x)+\Psi(y))\nonumber\\
    &=2EK+T_q-CP(x,y),
\end{align}
where $P(x,y) = E(\Psif(x)+\Psif(y))$ denotes the penalty term.\\

\textbf{Step 4: Analyse the nonlinear term $T_q$.} Let $e_1=\frac{y-x}{|y-x|}$. By the Lipschitz condition (ii.):
$$ T_q = q(|Du(y)|) - q(|Du(x)|) \le c ||Du(y)| - |Du(x)||. $$
We calculate the spatial derivative condition $\nabla Z_\epsilon = 0$ at the maximum point:
\begin{align*}
Du(x) &= \varphi' e_1 - E D\Psif(x), \quad Du(y) = \varphi' e_1 + E D\Psif(y).
\end{align*}
Using the above spatial derivatives conditions, the triangle inequality, and the gradient growth bound from (d.), $|D\Psif(x)| \le K_\nabla \Psif(x)$, we derive the following upper bound:
\begin{align*}
||Du(y)| - |Du(x)|| &\leq |Du(y)-Du(x)|\\
&=E|D\Psi(y)+D\Psi(x)|\\
&\le E \left(|D\Psif(y)| + |D\Psif(x)|\right) \\
&\le K_\nabla E (\Psif(y) + \Psif(x))\\
&=K_\nabla P(x,y).
\end{align*}
Thus, $T_{q} \le c K_\nabla P(x,y)$.\\

\textbf{Step 5: The contradiction.}
Substitute the $T_q$ upper bound from Step 4 into the final inequality \eqref{nonlinear_final_inequality} in Step 3:
\begin{align*}
    0&\leq 2EK+T_q-CP(x,y)\\
    &\le 2EK + c K_\nabla P(x,y) - C P(x,y).
\end{align*}
Since $\Psif \ge 1$, $ P(x,y)\ge 2E$. We assume that $K\ge 0$, so $2EK \le KP(x,y)$. This gives
\begin{align*}
    0 &\le K P(x,y) + c K_\nabla P(x,y) - C P(x,y)= P(x,y) (K + c K_\nabla - C).
\end{align*}
As $P(x,y)>0$ and we chose $C > K + c K_\nabla$, the term in parentheses is strictly negative. This yields the contradiction.

Hence, $Z_{\epsilon}<0$ for each $\epsilon>0$. Letting $\epsilon \to 0$ completes the proof.

\end{proof}

\begin{rem}
The same argument extends to principal-value kernels satisfying the Lévy condition, including the fractional Laplacian. More precisely, suppose that $\Lop$ is defined in the principal-value sense by
\[
\Lop u(x)=\PV\int_{\R^n}K(r)\bigl(u(x+r)-u(x)\bigr)\,dr,
\]
where $K:\R^n\setminus\{0\}\to[0,\infty)$ is even, radial, non-increasing in $|r|$, and satisfies
\[
\int_{\R^n}(1\wedge |r|^2)\,K(r)\,dr<\infty.
\]
Assume also that the odd extension $\tilde\varphi$ solves the corresponding one-dimensional principal-value equation for the reduced kernel $\tilde K$, and that the regularisation function $\Psif$ satisfies the same bounds as in \eqref{eq:K_bounds_nonlinear}, with $\Lop\Psif$ interpreted in the principal-value sense.

Then the proof above goes through with the same modifications as in the proof of Theorem~\ref{thm:pv_modulus_continuity}: one replaces the linear terms $\Lop u(y)-\Lop u(x)$ by truncated operators, applies the truncated reflection identity, and lets the truncation parameter tend to zero. The only new contribution is the same small-ball remainder term, which vanishes in the limit exactly as in Theorem~\ref{thm:pv_modulus_continuity}. The nonlinear estimate for
\[
T_q=q(|Du(y)|)-q(|Du(x)|)
\]
is unchanged, since it depends only on the first-order conditions at the maximum point and not on the absolute convergence of the nonlocal operator.

In particular, the theorem extends to the case
\[
\Lop=-(-\Delta)^s,\qquad s\in(0,1),
\]
for which
\[
K(r)=C_{n,s}|r|^{-n-2s}
\qquad\mbox{and}\qquad
\tilde K(w)=C_{1,s}|w|^{-1-2s},
\]
provided the corresponding principal-value version of condition \textup{(c.)} holds.
\end{rem}

% --- The Appendix starts here ---
% Requires \usepackage{xcolor} in the preamble.
\appendix
\section{Application to the fractional Laplacian}
\label{app:barrier}

While the main result of this paper (Theorem \ref{thm_modulus_Rn}) is established for a general class of non-local operators, our primary motivation is the fractional Laplacian. In this appendix, we verify that the standard fractional Laplacian admits a barrier function satisfying the requisite global boundedness condition (d.).\\

The specific regularisation function of interest is 
\begin{equation}\label{fn_reg_frac_lap}
    \Psi(x) = (1+|x|^2)^{\alpha/2}, \qquad 0<\alpha<2s,\qquad s\in(0,1).
\end{equation}
This function is smooth on $\R^n$
for all values of $\alpha$ and $s$. It is also radial, belongs to $C^{1,1}_{\mathrm{loc}}(\R^n)$, satisfies $\Psi\geq 1$, and satisfies $\Psi(x)\to\infty$ as $|x|\to\infty$. Applying the operator $\Lop$ to $\Psi(x)$ gives
\begin{equation}\label{frac_lap_psi}
    \Lop\Psi(x)=-(-\Delta)^s \Psi(x)= C_{n,s} \, \PV\int_{\mathbb{R}^n} \frac{\Psi(y) - \Psi(x)}{|y-x|^{n+2s}} \, dy.
\end{equation}
To facilitate the calculations to follow, we will rewrite $L_{\R^n}\Psi(x)$ in a symmetric form. 
 
First, for $\varepsilon>0$, define the truncated operator
\begin{equation}\label{frac_lap_trunc}
    \Lop^{\varepsilon}\Psi(x):=
    C_{n,s}\int_{|y-x|>\varepsilon}\frac{\Psi(y)-\Psi(x)}{|y-x|^{n+2s}}\,dy.
\end{equation}
Then
\[
    \Lop\Psi(x)=\lim_{\varepsilon\downarrow 0}\Lop^{\varepsilon}\Psi(x)
\]
whenever the principal value exists. Let $y=x+r$. Then
\begin{equation}\label{frac_lap_trunc_r}
    \Lop^{\varepsilon}\Psi(x)=
    C_{n,s}\int_{|r|>\varepsilon}\frac{\Psi(x+r)-\Psi(x)}{|r|^{n+2s}}\,dr.
\end{equation}
Now consider the change of variables $\rho=-r$, which yields
\begin{equation}\label{frac_lap_trunc_minus_r}
    \Lop^{\varepsilon}\Psi(x)=
    C_{n,s}\int_{|\rho|>\varepsilon}\frac{\Psi(x-\rho)-\Psi(x)}{|\rho|^{n+2s}}\,d\rho.
\end{equation}
Using the norm property $|-\rho|=|\rho|$ and renaming the dummy variable $\rho$ back to $r$, we have
\[
    \Lop^{\varepsilon}\Psi(x)=
    C_{n,s}\int_{|r|>\varepsilon}\frac{\Psi(x-r)-\Psi(x)}{|r|^{n+2s}}\,dr.
\]
We now express $\Lop^{\varepsilon}\Psi(x)$ as the average of these two equivalent forms:
\begin{align*}
    \Lop^{\varepsilon}\Psi(x)
    &=\frac{C_{n,s}}{2}\int_{|r|>\varepsilon}
    \frac{[\Psi(x+r)-\Psi(x)]+[\Psi(x-r)-\Psi(x)]}{|r|^{n+2s}}\,dr\\
    &=\frac{C_{n,s}}{2}\int_{|r|>\varepsilon}
    \frac{\Psi(x+r)+\Psi(x-r)-2\Psi(x)}{|r|^{n+2s}}\,dr.
\end{align*}

Thus, after passing to the limit $\varepsilon\downarrow 0$,
\begin{equation}\label{symm_form}
    \Lop\Psi(x)=\frac{C_{n,s}}{2} \, \PV \int_{\mathbb{R}^n} \frac{\Psi(x+r) + \Psi(x-r)-2\Psi(x)}{|r|^{n+2s}} \, dr.
\end{equation}

Since $\Psi$ is smooth, we may Taylor expand $\Psi(x\pm r)$ about $x$ up to second order, and show that the integral in (\ref{symm_form}) converges absolutely: let $\delta(x,r)$ $=\Psi(x+r) + \Psi(x-r)-2\Psi(x)$. Near the singularity at the origin, $\delta(x,r)=O(|r|^2)$. The integrand is consequently bounded by $O(|r|^{2-2s-n})$ near the origin, which is Lebesgue-integrable provided that $s<1$. Away from the origin, $\delta(x,r)=O(|r|^\alpha)$. The integrand is thus bounded by $O(|r|^{\alpha-2s-n})$, and it is Lebesgue-integrable for $\alpha < 2s$. As we require $0<\alpha<2s$ and $0<s<1$, the integral in equation (\ref{symm_form}) converges absolutely. This allows us to drop the principal value, giving the symmetric integral form
\begin{equation}
 \Lop\Psi(x)=\frac{C_{n,s}}{2} \int_{\mathbb{R}^n} \frac{\Psi(x+r) + \Psi(x-r)-2\Psi(x)}{|r|^{n+2s}} \, dr,
\end{equation}
which we use throughout the remainder of this appendix.\\

We now show that the function $F(x) = \Lop{\Psi}(x)$ is globally bounded on $\R^n$.

\begin{prop}\label{prop_frac_lap_reg_fn} If the parameter $\alpha$ satisfies $0 < \alpha < 2s$, then the function $F(x) = \Lop{\Psi}(x)$ is globally bounded on $\R^n$, and the global bound is non-negative.
\end{prop}
\begin{proof} The strategy is to prove that $F(x)$ is continuous on $\R^n$ and vanishes at infinity. A standard theorem in analysis states that a continuous function on $\R^n$ that vanishes at infinity is globally bounded.\\

\textbf{Step 1: Continuity.} We employ the sequential characterisation of continuity and the Dominated Convergence Theorem (DCT). Let $\{x_k\}_{k=1}^\infty$ be a sequence in $\mathbb{R}^n$ converging to an arbitrary point $x_0 \in \mathbb{R}^n$. We must show that $F(x_k) \to F(x_0)$ as $k \to \infty$.

To simplify notation, let the integrand be defined by
\[ h(x, r) = \frac{\Psi(x+r) + \Psi(x-r) - 2\Psi(x)}{|r|^{n+2s}}. \]
Define the sequence of functions $f_k(r) = h(x_k, r)$. From the absolute convergence established earlier, each $f_k(r)$ is Lebesgue-integrable. Furthermore, since $\Psi$ is continuous, we have pointwise convergence:
\[ \lim_{k \to \infty} f_k(r) = h(x_0, r)\eqqcolon f_0(r) \quad \text{for all } r \neq 0. \]

To complete the proof via the DCT, it remains to construct an integrable dominating function $g(r)$ such that $|f_k(r)| \le g(r)$ a.e. for all $k$. Since $\{x_k\}_{k=1}^\infty$ converges, it is bounded and contained in a compact set $K$. We establish bounds uniform for all $x \in K$. To facilitate the analysis, we split the integration region into a local region, $|r| < 1$, and a distant region, $|r|\geq 1$.

\textbf{Local region, $|r| < 1$:} The key idea is to leverage the smoothness of $\Psi(x)$ and rewrite the integrand using a Taylor expansion of $\Psi(x\pm r)$ about $x$ up to first order, with the remainder in Lagrange form. 

Fix $x,r\in\R^n$. Define $g:[-1,1]\to\R$ by $g(t)=\Psi(x+tr)$. Let $\delta(x,r):=\Psi(x+r)+\Psi(x-r)-2\Psi(x)$. Notice that
\begin{equation}
\delta(x,r)=g(1)+g(-1)-2g(0).    
\end{equation}
We need to calculate derivatives at points within a distance of one from the set $K$. Let this set be denoted by $K'$, defined as
\begin{equation*}
    K' = \{ y \in \mathbb{R}^n : \text{dist}(y, K) \le 1 \}.
\end{equation*}

The first order Taylor expansion of $g(t)$ about zero is
\begin{align}\label{psi_taylor_exp}
    g(t)&=g(0)+g'(0)t+\frac{g''(\tau)}{2}t^2,\qquad \tau\in(0,t),
\end{align}
where 
\begin{equation}\label{psi_second_derivative}
    g''(t)=r^T\Hess\Psi(x+tr)r.
\end{equation}
Using equations (\ref{psi_taylor_exp}) and (\ref{psi_second_derivative}), $\delta(x,r)$ can be rewritten as
\begin{align*}
    \delta(x,r)&=\frac{1}{2}g''(\tau_1)+\frac{1}{2}g''(\tau_2)\\
    &=\frac{1}{2} r^T\Hess\Psi(x+\tau_1r)r+\frac{1}{2}r^T\Hess\Psi(x+\tau_2r)r,\qquad \tau_1\in(0,1),\quad \tau_2\in(-1,0)\\
    &=\frac{1}{2} r^T\Hess\Psi(\xi_1)r+\frac{1}{2}r^T\Hess\Psi(\xi_2)r,
\end{align*}
where $\xi_1=x+\tau_1r$, $\xi_2=x+\tau_2r$, and $\xi_1,\xi_2\in B(x,|r|)$, the ball centred at $x$ of radius $|r|$. 

We now calculate a uniform upper bound for $\delta(x,r)$:
\begin{align*}
|\delta(x,r)|&\leq  \frac{1}{2} \left|r^T\Hess\Psi(\xi_1)r\right|+\frac{1}{2}\left|r^T\Hess\Psi(\xi_2)r\right|\\
&=\frac{1}{2} \left|r\cdot\Hess\Psi(\xi_1)r\right|+\frac{1}{2}\left|r\cdot\Hess\Psi(\xi_2)r\right|\\
&\leq\frac{1}{2}|r|\left\|\Hess\Psi(\xi_1)r\right\|+\frac{1}{2}|r|\left\|\Hess\Psi(\xi_2)r\right\|\\
&\leq\frac{1}{2} \left\|\Hess\Psi(\xi_1)\right\|_{op}|r|^2+\frac{1}{2}\left\|\Hess\Psi(\xi_2)\right\|_{op}|r|^2\\
&\leq \frac{1}{2} |r|^2\sup_{\xi\in B(x,|r|)}\left\|\Hess\Psi(\xi)\right\|_{op}+\frac{1}{2}|r|^2\sup_{\xi\in B(x,|r|)}\left\|\Hess\Psi(\xi)\right\|_{op}\\
&=|r|^2\sup_{\xi\in B(x,|r|)}\left\|\Hess\Psi(\xi)\right\|_{op},
\end{align*}
where 
\begin{equation*}
    \|\Hess\Psi(x)\|_{op}=\sup_{|v|=1}|\Hess\Psi(x)v|.
\end{equation*}
Since $\Psi(x)$ is smooth, the Hessian map $x\mapsto\Hess\Psi(x)$ is continuous. Moreover, the norm map $A\mapsto \|A\|_{op}$ is also continuous. The Hessian norm map $x\mapsto\|\Hess\Psi(x)\|_{op}$ is a composition of continuous maps, so it, too, is continuous. It follows, from the Extreme Value Theorem, that $\|\Hess\Psi(x)\|_{op}$ attains an absolute maximum value on the compact set $K'$:
\begin{equation}
    M_1=\sup_{x\in K'}\|\Hess\Psi(x)\|_{op}<\infty.
\end{equation}
As $|r|<1$ and $x\in K$, any ball $B(x,|r|)$ is a proper subset of $K'$. Thus
\begin{equation*}
    |\delta(x,r)|\leq |r|^2\sup_{\xi\in B(x,|r|)}\|\Hess\Psi(\xi)\|_{op}\leq M_1|r|^2.
\end{equation*}
In particular, 
\begin{equation}
    |\delta(x_k,r)|\leq M_1|r|^2
\end{equation}
for every $k$ and $|r|<1$.

Thus, the integrand $h(x_k,r)$ is bounded above by
\begin{equation*}
|h(x_k,r)|\leq \frac{M_1|r|^2}{|r|^{n+2s}}=M_1|r|^{2-2s-n}.
\end{equation*}
A dominating function for the local region is therefore
\[
g_1(r)=M_1|r|^{2-2s-n}.
\]
We verify this function is integrable. Transforming to general spherical coordinates, we have $r=t\omega$, where $t=|r|$ and $\omega\in S^{n-1}$, $S^{n-1}$ denoting the unit sphere in $\R^n$. Let $\omega_n$ be the surface area of $S^{n-1}$. Then,
\begin{align*}
    \int_{B(0,1)}g_1(r)dr&=M_1\int_{B(0,1)}|r|^{2-2s-n}dr\\
    &=\int_{S^{n-1}}\int_{0}^1 t^{2-2s-n}\cdot t^{n-1}dtdS^{n-1}(\omega)\\
    &=\omega_n M_1\int_{0}^1 t^{1-2s}{\,dt}\\
    &<\infty
\end{align*}
provided $2-2s>0$ or $s<1$. Since we assume $s\in(0,1)$, this condition is satisfied.

\textbf{Distant region, $|r|\geq 1$:} First note that for $\alpha >0$ and every $y\in\R^n$,
\begin{equation}\label{psi_global_growth}
    |\Psi(y)|\leq C'(1+|y|^\alpha),
\end{equation}
as when $|y|<1$, $|\Psi(y)|\leq 2C$, and when $|y|\geq 1$, $|\Psi(y)|\leq 2C|y|^\alpha$. 

Since $\{x_k\}$ is bounded, there exists $D>0$ such that $|x|\le D$ for all $x\in K$. Hence, for $|r|\ge 1$,
\begin{equation}\label{xpmbound}
    |x\pm r|\le |x|+|r|\le D+|r|\le (D+1)|r|.
\end{equation}
Combining \eqref{psi_global_growth} and \eqref{xpmbound} yields, for $|r|\ge 1$ and $x\in K$,
\[
|\Psi(x\pm r)|
\le C'(1+|x\pm r|^\alpha)
\le C'(1+(D+1)^\alpha|r|^\alpha)
\le E_1|r|^\alpha,
\]
where in the last step we used $|r|^\alpha\ge 1$ (since $\alpha>0$) and absorbed constants into $E_1>0$.

Moreover, by continuity of $\Psi$ and compactness of $K$,
\[
E_0:=\sup_{x\in K}|\Psi(x)|<\infty.
\]
Therefore, for $|r|\ge 1$ and $x\in K$,
\[
|\delta(x,r)|
\le |\Psi(x+r)|+|\Psi(x-r)|+2|\Psi(x)|
\le 2E_1|r|^\alpha+2E_0
\le H|r|^\alpha,
\]
again using $|r|^\alpha\ge 1$ to absorb the constant term. Consequently,
\[
|h(x_k,r)|\le \frac{H|r|^\alpha}{|r|^{n+2s}}=H|r|^{\alpha-2s-n}
\qquad\text{for } |r|\ge 1.
\]
Thus we may take $g_2(r):=H|r|^{\alpha-2s-n}$ on $\{|r|\ge 1\}$.

Checking integrability, by transforming to general spherical coordinates as done for the local region, we have
\begin{align*}
    \int_{|r|\geq 1}g_2(r)dr &= H\int_{|r|\geq 1}|r|^{\alpha-2s-n}dr\\
    &=\omega_n H\int_1^\infty t^{\alpha-2s-n}\cdot t^{n-1}dt\\
    &=\omega_n H\int_1^\infty t^{\alpha-2s-1}dt\\
    &<\infty
\end{align*}
provided $\alpha-2s<0$ or $\alpha <2s$. Since we assume that $\alpha\in (0,2s)$, for $s\in(0,1)$, this condition is satisfied.

\textbf{Dominating function:} Let $g(r) = g_1(r)\chi_{|r|<1} + g_2(r)\chi_{|r|\ge 1}$. We have demonstrated that $g(r) \in L^1(\R^n)$ and $|f_k(r)|=|h(x_k, r)| \le g(r)$ a.e for every $k$. By the DCT, $\lim_{k\to\infty} F(x_k) = F(x_0)$. Thus, $F(x)$ is continuous on $\R^n$.\\

\textbf{Step 2: Convergence to zero at infinity.} The goal is to show that $\lim_{|x|\to\infty} F(x) = 0$. Let $R = |x|$, and assume $R>2$. We split the domain of integration into the near field ($|r| \le R/2$) and the far field ($|r| > R/2$):
\begin{equation}
|F(x)| \le \frac{C_{n,s}}{2} \left( \underbrace{\int_{|r|\le R/2} \frac{|\delta(x,r)|}{|r|^{n+2s}} dr}_{I_{N}} + \underbrace{\int_{|r|> R/2} \frac{|\delta(x,r)|}{|r|^{n+2s}} dr}_{I_{F}} \right).
\end{equation}

\textbf{Near field, $|r|\leq R/2$:} From the derivation of the Lagrange error bound from the local-region analysis in Step 1, we have
\[
|\delta(x,r)|\leq |r|^2\sup_{\xi\in B(x,|r|)}\left\|\Hess\Psi(\xi)\right\|_{op}.
\] 
Differentiating $\Psi(x)$ twice gives
% \begin{equation*}
%     (D^2(x))_{ij}=\alpha(1+|x|^2)^{\alpha/2-1}\delta_{ij}+\alpha(\alpha-2)(1+|x|^2)^{\alpha/2-2}x_ix_j,
% \end{equation*}
% or in matrix form
\begin{equation}
    D^2\Psi(x)=\alpha(1+|x|^2)^{\alpha/2-1}I+\alpha(\alpha-2)(1+|x|^2)^{\alpha/2-2}(x\otimes x),
\end{equation}
where $I$ denotes the identity matrix and $x\otimes x$ is the outer product. To calculate an upper bound for the operator norm of $D^2\Psi(x)$, we first analyse $\|I\|_{op}$ and $\|x\otimes x\|_{op}$.

The operator norm of $I$ is
\begin{equation}
    \|I\|_{op}=\sup_{|v|=1}|Iv|=\sup_{|v|=1}|v|=1.    
\end{equation}

Writing the outer product as
\[
x\otimes x=x x^\top = 
\begin{pmatrix} 
x_1x_1 & x_1 x_2 & \dots & x_1 x_n \\ 
x_2 x_1 & x_2x_2 & \dots & x_2 x_n \\ 
\vdots & \vdots & \ddots & \vdots \\ 
x_n x_1 & x_n x_2 & \dots & x_nx_n 
\end{pmatrix}=\begin{pmatrix} 
x_1x & x_2x & \dots & x_nx
\end{pmatrix}
\]
shows that it is a rank-1 symmetric matrix, since every column is a multiple of the column vector $x$, and $x_ix_j=x_jx_i$. It follows that $x\otimes x$ has $n$ real eigenvalues, which we denote by $\lambda_1,\lambda_2,\ldots,\mbox{and }\lambda_n$, and the spectral theorem applied to real symmetric matrices yields
\begin{equation}
    \|x\otimes x\|_{op}=\max_{i}\{\lambda_1,\lambda_2,\ldots,\lambda_n\}.
\end{equation}
Notice that as
\[
(x\otimes x)x=(xx^\top)x=x(x^\top x)=|x|^2x,
\]
$x$ is an eigenvector of $x\otimes x$ with eigenvalue $|x|^2$. Moreover, any vector $v$ that is orthogonal to $x$ is both in the kernel of $x\otimes x$ and an eigenvector with eigenvalue $0$, since
\[
(x\otimes x)v=x(x^\top v)=0x=0.
\]
As $x\otimes x$ has rank $1$, the rank-nullity theorem implies that the dimension of its kernel is $n-1$, which is also the dimension of the $0$-eigenspace. Thus the $|x|^2$-eigenspace has dimension 1, and we know all of the eigenvalues of $x\otimes x$. So
\begin{equation*}
    \|x\otimes x\|_{op}=|x|^2.
\end{equation*}

The operator norm of $\Hess\Psi(x)$ is therefore bounded above by
\begin{align*}
    \|\Hess\Psi(x)\|_{op}
    &=\left\|\alpha(1+|x|^2)^{\alpha/2-1}I+\alpha(\alpha-2)(1+|x|^2)^{\alpha/2-2}(x\otimes x)\right\|_{op}\\
    &\leq \alpha(1+|x|^2)^{\alpha/2-1}\|I\|_{op}+|\alpha(\alpha-2)|(1+|x|^2)^{\alpha/2-2}\|x\otimes x\|_{op}\\
    &=\alpha(1+|x|^2)^{\alpha/2-1}+|\alpha(\alpha-2)|(1+|x|^2)^{\alpha/2-2}|x|^2.
\end{align*}
Let the first term be denoted by $T_1$ and the second term be denoted by $T_2$. When $|x|\geq 1$ (which is satisfied as we assume $R:=|x|>2$), we have
\[
T_1=\alpha(1+|x|^2)^{\alpha/2-1}\leq \alpha |x|^{\alpha-2}\leq F_1|x|^{\alpha-2}.
\]
Similarly,
\[
T_2=|\alpha(\alpha-2)|(1+|x|^2)^{\alpha/2-2}|x|^2\leq F_2|x|^{\alpha-2}.
\]
It follows that
\begin{equation}\label{upper_bound_hess_2}
    \|\Hess\Psi(x)\|_{op}\leq F_1|x|^{\alpha-2}+F_2|x|^{\alpha-2}\leq F|x|^{\alpha-2},\quad |x|\geq 1.
\end{equation}

On the basis of the foregoing analysis, we can now calculate an upper bound for $|\delta(x,r)|$ in terms of $R$ and $r$. Observe that any point $\xi\in B(x,|r|)$ satisfies 
\[
|\xi|\geq |x|-|r|\geq R-R/2=R/2>1,
\]
since we assume that $|r|\leq R/2$. So using the upper bound from inequality (\ref{upper_bound_hess_2}), we have
\[
\|\Hess\Psi(\xi)\|_{op}\leq F|\xi|^{\alpha-2},\quad |\xi|\geq 1.
\]
Since $0<\alpha<2s<2$ for $0<s<1$, the power function decays as $|\xi|\to\infty$. Thus the upper bound is maximised when $|\xi|=R/2$. So
\[
\sup_{\xi\in B(x,|r|)}\|\Hess\Psi(\xi)\|_{op}\leq J(R/2)^{\alpha-2}, 
\]
giving
\[
|\delta(x,r)|\leq|r|^2\sup_{\xi\in B(x,|r|)}\|\Hess\Psi(\xi)\|_{op}\leq J(R/2)^{\alpha-2}|r|^2.
\]

We can now determine the limiting behaviour of the near-field integral $I_{N}$:
\begin{align*}
    I_N&\leq\int_{|r|\leq R/2}\frac{|\delta(x,r)|}{|r|^{n+2s}}{\,dr}\\
    &\leq J(R/2)^{\alpha-2}\int_{|r|\leq R/2}|r|^{2-2s-n}dr\\
    &=J'R^{\alpha-2}\int_0^{R/2}t^{2-2s-n}t^{n-1}dt\\
    &=J'R^{\alpha-2}\int_0^{R/2}t^{1-2s}dt\\
    &=\frac{J'R^{\alpha-2}}{2-2s}\left[t^{2-2s}\right]_0^{R/2},\quad s\in(0,1)\\
    &=\frac{J'R^{\alpha-2}}{2-2s}\left[(R/2)^{2-2s}-0\right]\\
    &=G|x|^{\alpha-2s},\quad R:=|x|,
\end{align*}
so $0\leq I_N \leq G|x|^{\alpha-2s}\to 0$ as $|x|\to \infty$, since $\alpha<2s$. Thus, $\lim_{|x|\to\infty}I_N=0$.

\textbf{Far field, $|r|>R/2$:} From the continuity proof in the distant region in Step 2, we derived
\[
|\Psi(y)|\leq C'(1+|y|^\alpha)
\]
for any $\alpha\in(0,2s)$ and $s\in(0,1)$. Thus,
\begin{equation}\label{dist_psi_ubound}
  |\Psi(x\pm r)|\leq C'(1+|x\pm r|^\alpha)\leq C'(1+(|x|+|r|)^\alpha), \quad \alpha > 0,  
\end{equation}
assuming $x,r\in\R^n$ such that $|x|=:R>2$ and $|r|>R/2>1$. 

We now analyse the power function $f(t)=t^p$ when $p>0$ and $t\geq 0$. Notice that when $p\in(0,1]$, the function is concave on $(0,\infty)$ and $f(0)=0$. The function $f$ is therefore subadditive, so 
\[
a^p+b^p\geq (a+b)^p.
\]
When $p>1$, the function $t^p$ is strictly convex on $(0,\infty)$, and so we may apply Jensen's inequality to give
\[
(a+b)^p\leq 2^{p-1}(a^p+b^p).
\]
Combining these two inequalities gives
\begin{equation}\label{inequa_power_sums}
    (a+b)^p\leq C_p(a^p+b^p),    
\end{equation}
for $p>0$ and $a,b\in[0,\infty)$. 

Applying the power-of-sums inequality (\ref{inequa_power_sums}) to inequality (\ref{dist_psi_ubound}) yields
\begin{equation}\label{psi_power_sums}
    |\Psi(x\pm r)|\leq C'(1+(|x|+|r|)^\alpha)\leq C'(1+C_p(|x|^\alpha+|r|^\alpha)).    
\end{equation}

An upper bound for $|\delta(x,r)|$ is thus
\begin{align*}
    |\delta(x,r)|\leq |\Psi(x+r)|+|\Psi(x-r)|+2|\Psi(x)|\leq K_1(1+R^\alpha +|r|^\alpha),\quad R:=|x|>2,
\end{align*}
using inequalities (\ref{psi_power_sums}) and (\ref{psi_global_growth}). We may write the far field integral as
\[
I_F = \int_{|r|> R/2} \frac{|\delta(x,r)|}{|r|^{n+2s}} dr \leq K_1(1+R^\alpha)\int_{|r|>R/2}\frac{dr}{|r|^{n+2s}}+K_1\int_{|r|>R/2}\frac{|r|^\alpha}{|r|^{n+2s}}{\,dr}.
\]
The first integral evaluates to
\[
\int_{|r|>R/2}\frac{dr}{|r|^{n+2s}}=\omega_n\int_{R/2}^\infty t^{-n-2s}t^{n-1}dt=\omega_n\int_{R/2}^\infty t^{-1-2s}dt=\frac{\omega_n}{-2s}\left[t^{-2s}\right]_{R/2}^\infty=L_1|x|^{-2s}.
\]
The second integral gives
\[
\int_{|r|>R/2}\frac{|r|^\alpha}{|r|^{n+2s}}{\,dr}=\omega_n\int_{R/2}^\infty t^{\alpha-2s-1}dt=\frac{\omega_n}{\alpha-2s}\left[t^{\alpha-2s}\right]_{R/2}^\infty=L_2|x|^{\alpha-2s}
\]
since $0<\alpha<2s$. So we have
\begin{align*}
    I_F&\leq K_1(1+R^\alpha)\int_{|r|>R/2}\frac{dr}{|r|^{n+2s}}+K_1\int_{|r|>R/2}\frac{|r|^\alpha}{|r|^{n+2s}}{\,dr}\\
    &=K_1(1+R^\alpha)L_1|x|^{-2s}+K_1 L_2|x|^{\alpha-2s}\\
    &=K_1L_1|x|^{-2s}+K_1L_1|x|^{\alpha-2s}+K_1L_2|x|^{\alpha-2s}.
\end{align*}
Each of the terms converges to zero since $s>0$ and $\alpha<2s$. Thus, $\lim_{|x|\to\infty}I_F=0$. 

As $|F(x)|$ converges to zero, it follows that $F(x)$ converges to zero as $|x|\to\infty$. This completes Step 2.
% -|F(x)|\geqF(x)\leq |F(x)| and |F(x)|\to 0 as |x|\to\infty.
\\

\textbf{Step 3: Global boundedness.} Since $F(x)$ is continuous on $\R^n$ and converges to zero as $|x|\to\infty$, it is globally bounded.\\

\textbf{Step 4: Non-negativity of the global bound.} Define
\[
K_{\Psi}:=\sup_{x\in\R^n}F(x).
\]
By Step 3, this quantity is finite. It suffices to show that $K_{\Psi}$ is non-negative. Notice that $\Psi(z)=(1+|z|^2)^{\alpha/2}\geq 1$ for all $z\in\R^n$ and $\Psi(0)=1$. The singular fractional kernel $K_s(r):=C_{n,s}|r|^{-n-2s}$ is also strictly positive for $r\neq 0$. Thus,
\[
g(r):=\frac{1}{2}K_s(r)(\Psi(0+r)+\Psi(0-r)-2\Psi(0))=K_s(r)(\Psi(r)-1)\geq 0,
\]
with strict inequality for $r\neq 0$. Thus,
\[
F(0)=\int_{\R^n}g(r)dr\geq 0.
\]
It follows that $K_{\Psi}=\sup_{x\in\R^n}F(x)\geq F(0)\geq 0$, so the global bound is non-negative.

\begin{rem}
    The global bound is actually positive, since $g(r)>0$ a.e, so $\int_{\R^n}g(r)dr\neq 0$.
\end{rem}
    
\end{proof}

%%%%
% Declaration of AI use
%%%%
\section*{Declaration of generative AI and AI-assisted technologies in the manuscript preparation process}
The second author used AI-assisted tools,
including Aristotle (Harmonic), ChatGPT, Claude, and Gemini, to assist with aspects of the Lean 4 formalisation
of Theorem~2.3 and its proof, setting up an automated Lean workflow, and exploring
extensions of Theorem~2.3 to operators including the fractional Laplacian. These
tools were also used to review the original statement and proof of Theorem~2.3,
helping to identify an error that was subsequently resolved by the authors with
AI-assisted checking, and to assist with checking and editing a detailed appendix.

The Lean formalisation repository is currently under review and can be made available on request. 

The authors
reviewed and verified all mathematical statements, proofs, citations, and final text,
and take full responsibility for the content of the manuscript.

%%%%
% Acknowledgments
%%%%
\section*{Acknowledgments}
We thank Abraham Ng for his assistance in applying the `coupling-by-reflection' technique in the proof of Theorem \ref{thm_modulus_Rn}.

%%% For a numbered display, use
%%% \begin{equation}
%%%   \label{something}
%%%   The display goes here
%%% \end{equation}
%%% and you can refer to it as \eqref{something}.
%%% For an unnumbered display, use
%%% \begin{equation*}
%%%   The display goes here
%%% \end{equation*}
%%% To import a graphics file, you must have said
%%% \usepackage{graphicx}
%%% in the preamble (i.e., before the \begin{document}).
%%% Putting it into a figure environment enables it to float to the
%%% next page if there isn’t enough room for it on the current page.
%%% The \label command must come after the \caption command.
% \begin{figure}[h]
%   \includegraphics{filename}
%   \caption{Some caption}
%   \label{somelabel}
% \end{figure}
%%% -------------------------------------------------------------------
%%% -------------------------------------------------------------------
%%% This is where we create the bibliography.
%%% Bibtex 
\bibliographystyle{alpha}
\bibliography{PhD_thesis}

% \begin{bibdiv}
%   \begin{biblist}
% %%% The format of bibliography items is as in the following examples:
% \bib{yellowmonster}{book}{
%   author={Bousfield, A.K.},
%   author={Kan, D.M.},
%   title={Homotopy Limits, Completions and Localizations},
%   date={1972},
%   series={Lecture Notes in Mathematics},
%   volume={304},
%   publisher={Springer-Verlag},
%   address={Berlin-New York}
% }
% %%% \bib{HA}{book}{
% %%%   author={Quillen, Daniel G.},
% %%%   title={Homotopical Algebra},
% %%%   series={Lecture Notes in Mathematics},
% %%%   volume={43},
% %%%   publisher={Springer-Verlag},
% %%%   address={Berlin-New York},
% %%%   date={1967}
% %%% }
% %%% \bib{serre:shfs}{article}{
% %%%   author={Serre, Jean-Pierre},
% %%%   title={Homologie Singuli\‘ere des Espaces Fibr\’es.  Applications},
% %%%   journal={Ann. of Math. (2)},
% %%%   date={1951},
% %%%   volume={54},
% %%%   pages={425--505}
% %%% }
%   \end{biblist}
% \end{bibdiv}

\bigskip

%%% If there are multiple authors, they’re described one at a time:
% First author: 
%\noindent\author{Ben Andrews}\\
\address{Mathematical Science Institute, Australian National University, ACT 0200, Australia} \curraddr{}

\email{Email address: ben.andrews@anu.edu.au} \thanks{}\\ 

% Second author: 
%\noindent\author{Sophie Chen}\\
\address{Independent Researcher, Phillip, ACT 2606, Australia} \curraddr{}

\email{Email address: sophie.c.chen@gmail.com} \thanks{}
%%% Third author: \author{} \address{} \curraddr{} \email{} \thanks{}
\end{document}